\newtheorem{theorem}{Theorem}[section]
\newtheorem{corollary}[theorem]{Corollary}
\newtheorem{lemma}[theorem]{Lemma}
\newtheorem{proposition}[theorem]{Proposition}
\newtheorem{Definition}[theorem]{Definition}
\newtheorem{Example}[theorem]{Example}
\newtheorem{Remark}[theorem]{Remark}
\newenvironment{remark}{\begin{Remark}\begin{em}}{\end{em}\end{Remark}}
\newcommand{\RomanNumeralCaps}[1]
    {\MakeUppercase{\romannumeral #1}}
\DeclareMathOperator{\tr}{tr}
\address{Miran Jeong \\ Department of Mathematics, Chungbuk National University, Cheongju 28644, Korea}
\email{jmr4006@chungbuk.ac.kr}
\address{Sejong Kim \\ Department of Mathematics, Chungbuk National University, Cheongju 28644, Korea}
\email{skim@chungbuk.ac.kr}
\address{Tin-Yau Tam \\ Department of Mathematics and Statistics, University of Nevada at Reno, NV 89557, USA}
\email{ttam@unr.edu}
\begin{document}

\author[Miran Jeong, Sejong Kim and Tin-Yau Tam]{Miran Jeong, Sejong Kim and Tin-Yau Tam}

\title{New weighted spectral geometric mean and quantum divergence}

\date{}
\maketitle

\begin{abstract}

A new class of weighted spectral geometric means has recently been introduced. In this paper, we present its inequalities in terms of the L\"owner order, operator norm, and trace. Moreover, we establish a log-majorization relationship between the new spectral geometric mean, and the R\'enyi relative operator entropy. We also give the quantum divergence of the quantity, given by the difference of trace values between the arithmetic mean and new spectral geometric mean. Finally, we study the barycenter that minimizes the weighted sum of quantum divergences for given variables.

\vspace{5mm}

\noindent {\bf Mathematics Subject Classification} (2020): 15B48, 81P17

\noindent {\bf Keywords}: spectral geometric mean, R\'{e}nyi relative operator entropy, log-majorization, quantum divergence, barycenter
\end{abstract}

\section{Introduction}

Kubo and Ando \cite{KA} introduced the concept of operator means for the set $\mathbb{P}$ of positive invertible operators on a complex Hilbert space $\mathcal{H}$, associated with operator monotone functions. Their  operator mean, denoted by $\sigma$, is defined for positive invertible operators $A$ and $B$, as
\begin{displaymath}
A \sigma B = A^{1/2} f(A^{-1/2} B A^{-1/2}) A^{1/2},
\end{displaymath}
where $f$ is an operator monotone function on $(0, \infty)$. A fundamental example of Kubo-Ando's operator means is the (metric) geometric mean defined using $f(x) = x^{t}$ for $t \in [0,1]$, and denoted by $A \sharp_{t} B$. In the finite-dimensional setting, $\mathbb{P}_{m}$, of $m \times m$ positive definite matrices, the geometric mean $A \sharp_{t} B$ has nice geometric meanings such as being the unique geodesic connecting $A$ and $B$ under the natural Riemannian trace distance
$$
d_{R}(A, B) = \Vert \log (A^{-1/2} B A^{-1/2}) \Vert_{F},
$$
where
$\Vert \cdot \Vert_{F}$ denotes the Forbenius norm.
In other words,
\begin{displaymath}
A \sharp_{t} B = \underset{X \in \mathbb{P}_{m}}{\arg \min} \, (1-t) d_{R}^{2}(X, A) + t d_{R}^{2}(X, B).
\end{displaymath}
 This characterization allows the extension of two-variable geometric means to multivariable means, such as the Cartan mean.

On the other hand, various operator means that do not satisfy monotonicity under the L\"owner partial order, known as non-Kubo-Ando means, have also been studied. A prominent example is the spectral geometric mean, whose weighted form is given in \cite{LL07}:
\begin{displaymath}
A \natural_{t} B = (A^{-1} \sharp B)^{t} A (A^{-1} \sharp B)^{t}, \quad t \in [0,1].
\end{displaymath}

Although the spectral geometric mean lacks monotonicity, it satisfies the in-betweenness property (a weaker version of monotonicity) with respect to the near-order
 \cite{DF24}. Many results comparing the geometric mean and the spectral geometric mean have been established \cite{DF, GK, GK24, GT22, Kim21}, including the following log-majorization relationship:
\begin{displaymath}
A \sharp_{t} B \prec_{\log} \exp ((1-t) \log A + t \log B) \prec_{\log} A \natural_{t} B.
\end{displaymath}
Recently, a new spectral geometric mean was introduced in \cite{DTV}:
\begin{displaymath}
F_{t}(A, B) := (A^{-1} \sharp_{t} B)^{1/2} A^{2-2t} (A^{-1} \sharp_{t} B)^{1/2}, \quad t \in [0,1].
\end{displaymath}
This is a  path connecting $A$ at $t = 0$ and $B$ at $t = 1$, with $F_{1/2}(A, B) = A \natural_{1/2} B$. It shares  properties with the spectral geometric mean $A \natural_{t} B$. In the context of positive invertible operators on $\mathcal{H}$, we derive interesting inequalities of this new mean in terms of L\"owner order and operator norm.
In the finite-dimensional setting $\mathbb{P}_{m}$, we establish a  trace inequality and a log-majorization relationship between the new spectral geometric mean and R\'{e}nyi relative operator entropy. These results allow us to define the quantity
\begin{displaymath}
\Phi(A, B) = \tr\, [A \nabla_{t} B - F_{t}(A, B)], \quad t \in [0,1]
\end{displaymath}
as a quantum divergence, analogous to the findings in \cite{GJK}. Moreover, we study the barycenter as a multi-variable mean that minimizes the weighted sum of quantum divergences for given variables.

\section{New weighted spectral geometric mean}

Let $B(\mathcal{H})$ be the Banach space of all bounded linear operators on a Hilbert space $\mathcal{H}$ over the complex field, and let $S(\mathcal{H}) \subset B(\mathcal{H})$ be the set of all self-adjoint operators.
Let $\mathbb{P} \subset S(\mathcal{H})$ be the open convex cone of all positive definite operators. For the finite-dimensional setting $\mathcal{H} = \mathbb{C}^{m}$, we denote by $\mathbb{H}_{m}$ and $\mathbb{P}_{m}$  the sets of all $m \times m$ Hermitian matrices and positive definite matrices, respectively.

The (metric) geometric mean of $A, B \in \mathbb{P}$ is defined as
\begin{displaymath}
A \sharp_{t} B := A^{1/2} (A^{-1/2} B A^{-1/2})^{t} A^{1/2}, \quad t \in [0,1].
\end{displaymath}
This mean represents the unique geodesic on $\mathbb{P}_{m}$ joining $A$ and $B$ under the natural Riemannian trace metric. Note that $A \sharp B = A \sharp_{1/2} B$ is a unique positive definite solution of Riccati equation
$$
X A^{-1} X = B.
$$ The following is the well-known arithmetic-geometric-harmonic mean inequalities:
\begin{displaymath}
A !_{t} B \leq A \sharp_{t} B \leq A \nabla_{t} B, , \quad t \in [0,1],
\end{displaymath}
where $A \nabla_{t} B = (1-t) A + t B$ and $A !_{t} B = [(1-t) A^{-1} + t B^{-1}]^{-1}$ are the arithmetic and harmonic means, respectively.

The spectral geometric mean of $A, B \in \mathbb{P}$ is defined as
\begin{displaymath}
A \natural_{t} B := (A^{-1} \sharp B)^{t} A (A^{-1} \sharp B)^{t}, \quad t \in [0,1].
\end{displaymath}
This mean has properties analogous to the (metric) geometric mean \cite{GK, GT22, Kim21}  and represents a geodesic on $\mathbb{P}$ joining $A$ and $B$ under the semi-metric $d(A,B) = 2 \Vert \log (A^{-1} \sharp B) \Vert$, where $\Vert \cdot \Vert$ denotes the operator norm. For simplicity, we write $A \natural B = A \natural_{\frac{1}{2}} B$.

Recently, a new weighted spectral geometric mean was introduced in \cite{DTV}:
\begin{displaymath}
F_{t}(A, B) = (A^{-1} \sharp_{t} B)^{\frac{1}{2}} A^{2-2t} (A^{-1} \sharp_{t} B)^{\frac{1}{2}}, \quad t\in [0,1].
\end{displaymath}
One can easily see that $F_{0}(A, B) = A$, $F_{1}(A, B) = B$, and $F_{\frac{1}{2}}(A, B) = A \natural B$. Moreover, $F_{t}(A, B)$ is a unique positive definite solution of the Riccati equation
\begin{displaymath}
(A^{-1} \sharp_{t} B)^{\frac{1}{2}} = A^{2t-2} \sharp X, , \quad t \in [0,1].
\end{displaymath}

The following properties of $F_{t}(A, B)$ were established  in \cite[Proposition 2.2]{DTV} for the finite-dimensional setting $\mathbb{P}_{m}$, and the proofs extend to $\mathbb{P}$.
\begin{proposition} \label{P:properties}
Let $A, B \in \mathbb{P}$. For all $t \in [0,1]$,
the following properties hold.
\begin{itemize}
  \item[(1)] $F_{t}(A, B) = A^{1-t} B^{t}$ if $A, B$ commute.
  \item[(2)] $F_{t}(a A, b B) = a^{1-t} b^{t} F_{t}(A, B)$ for any $a, b > 0$.
  \item[(3)] $F_{t}(U A U^{*}, U B U^{*}) = U F_{t}(A, B) U^{*}$ for any unitary operator $U$.
  \item[(4)] $F_{t}(A^{-1}, B^{-1}) = F_{t}(A, B)^{-1}$.
  \item[(5)] $2(A \nabla_{t} B^{-1})^{-\frac{1}{2}} - A^{2(t-1)} \leq F_{t}(A, B) \leq \left[ 2(A^{-1} \nabla_{t} B)^{-\frac{1}{2}} - A^{2(1-t)} \right]^{-1}$.
  \item[(6)] $\displaystyle \lim_{s \to 0} F_{t}(A^{s}, B^{s})^{1/s} = \exp ((1-t) \log A + t \log B)$.
\end{itemize}
\end{proposition}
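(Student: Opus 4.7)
Items (1)--(4) are formal algebraic identities and I would prove them by direct substitution, invoking only the standard properties of the Kubo--Ando weighted geometric mean. For (1), commutativity gives $A^{-1}\sharp_t B = A^{t-1}B^t$, and the sandwich $A^{(t-1)/2}B^{t/2}\cdot A^{2-2t}\cdot A^{(t-1)/2}B^{t/2}$ collapses to $A^{1-t}B^t$. For (2), joint positive homogeneity $(aA)\sharp_t(bB) = a^{1-t}b^t\,A\sharp_t B$ combined with $(aA)^{2-2t} = a^{2-2t}A^{2-2t}$ produces the scaling. Property (3) is unitary congruence invariance of $\sharp_t$ together with that of operator powers. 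For (4), the self-duality $(X\sharp_t Y)^{-1} = X^{-1}\sharp_t Y^{-1}$ yields $(A^{-1}\sharp_t B)^{-1} = A\sharp_t B^{-1}$, and substituting in $F_t(A^{-1},B^{-1})$ gives the claim.

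The Löwner sandwich (5) is the substantive point. My plan is to start from the elementary completion-of-squares identity
\[
XYX - (2X - Y^{-1}) \;=\; \bigl(Y^{1/2}X - Y^{-1/2}\bigr)^{\!*}\bigl(Y^{1/2}X - Y^{-1/2}\bigr) \;\geq\; 0,
\]
valid for all $X,Y\in\mathbb{P}$, and apply it with $X = (A^{-1}\sharp_t B)^{1/2}$ and $Y = A^{2-2t}$ to obtain
\[
F_t(A,B) \;\geq\; 2(A^{-1}\sharp_t B)^{1/2} - A^{2(t-1)}.
\]
Next I would invoke the Kubo--Ando arithmetic--geometric inequality $A\nabla_t B^{-1} \geq A\sharp_t B^{-1}$ and the self-duality $A\sharp_t B^{-1} = (A^{-1}\sharp_t B)^{-1}$ used in (4) to get $(A\nabla_t B^{-1})^{-1}\leq A^{-1}\sharp_t B$, then apply operator monotonicity of $x\mapsto x^{1/2}$ to upgrade this to $(A\nabla_t B^{-1})^{-1/2}\leq (A^{-1}\sharp_t B)^{1/2}$. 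Combining, one arrives at the lower bound
\[
2(A\nabla_t B^{-1})^{-1/2} - A^{2(t-1)} \;\leq\; F_t(A,B).
\]
The upper bound then comes for free: apply this same lower bound to the pair $(A^{-1},B^{-1})$ and invert using the already-established property (4), the order-reversing nature of inversion flipping the inequality and converting $A^{2(t-1)}$ into $A^{2(1-t)}$.

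For the Lie--Trotter type limit (6), I would set $a=\log A$ and $b=\log B$, so that $A^s = e^{sa}$ and $B^s = e^{sb}$, and expand $A^{-s}\sharp_t B^s = A^{-s/2}(A^{s/2}B^sA^{s/2})^t A^{-s/2}$ using the real-analyticity of the exponential/logarithm. A Baker--Campbell--Hausdorff expansion gives $\log(A^{s/2}B^sA^{s/2}) = s(a+b)+O(s^2)$, hence
\[
(A^{-s}\sharp_t B^s)^{1/2} \;=\; \exp\!\Bigl(\tfrac{s}{2}\bigl((t-1)a + tb\bigr) + O(s^2)\Bigr).
\]
Sandwiching $A^{s(2-2t)} = e^{s(2-2t)a}$ between two such factors and combining exponents yields $\log F_t(A^s,B^s) = s\bigl((1-t)a + tb\bigr) + O(s^2)$; dividing by $s$ and sending $s\to 0$ produces the stated limit, with the holomorphic functional calculus supplying the same expansion in the infinite-dimensional case. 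The hard step will be identifying the right quadratic identity to drive (5); once that is in hand, everything else reduces to bookkeeping with standard operator-monotone facts and with the uniform error control for the exponential expansion.
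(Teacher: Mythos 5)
The paper does not actually prove this proposition: it cites \cite[Proposition 2.2]{DTV} and merely remarks that the finite-dimensional proofs extend to $\mathbb{P}$. Your write-up is therefore a genuine reconstruction, and it is essentially correct. Items (1)--(4) are the routine substitutions one expects. For (5), the completion-of-squares identity
\[
XYX - 2X + Y^{-1} = \bigl(Y^{1/2}X - Y^{-1/2}\bigr)^{*}\bigl(Y^{1/2}X - Y^{-1/2}\bigr) \ \ge\ 0,
\]
applied with $X=(A^{-1}\sharp_{t}B)^{1/2}$ and $Y=A^{2-2t}$, combined with $(A\nabla_{t}B^{-1})^{-1}\le (A\sharp_{t}B^{-1})^{-1}=A^{-1}\sharp_{t}B$ and operator monotonicity of the square root, is exactly the right mechanism and yields the lower bound cleanly. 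Item (6) is also fine: since $A,B\in\mathbb{P}$ are bounded and bounded below, $\log A$ and $\log B$ are bounded self-adjoint operators, so the Baker--Campbell--Hausdorff expansions converge for small $s$ and the $O(s^{2})$ error terms are uniform in operator norm.

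The one step you should not wave through is the inversion producing the upper half of (5). From the lower bound applied to $(A^{-1},B^{-1})$ together with item (4) you correctly obtain
\[
F_{t}(A,B)^{-1} \ \ge\ 2(A^{-1}\nabla_{t}B)^{-1/2} - A^{2(1-t)},
\]
but passing from $S\le T$ to $T^{-1}\le S^{-1}$ requires $S>0$, and the operator $2(A^{-1}\nabla_{t}B)^{-1/2}-A^{2(1-t)}$ is \emph{not} positive in general: at $t=0$ with $A=4I$ the bracket equals $2A^{1/2}-A^{2}=-12I$ while $F_{0}(A,B)=A=4I$, so the upper bound as literally written fails there. The inequality is meaningful only under the implicit proviso that the bracketed operator is positive definite, and your proof should state that proviso rather than attribute the flip solely to ``the order-reversing nature of inversion.'' This is arguably a defect of the statement as transcribed from \cite{DTV}, but your argument is where it surfaces, so make the hypothesis explicit.
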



\section{Operator inequalities and eigenvalue relationships}

The (metric) geometric mean $A \sharp_{t} B$ has a well-known property:
\begin{equation}\label{AH}
A \sharp_{t} B \leq I \Longrightarrow  A^{p} \sharp_{t} B^{p} \leq I \quad \mbox{for all}\ p \geq 1.
\end{equation}
This result, first discovered by Ando and Hiai \cite{AH},  has been generalized to the Cartan mean
for $\mathbb{P}_n$  \cite{Ya}  and the Karcher mean for $\mathbb{P}$ \cite{LL14}. In  the finite-dimensional case $\mathbb{P}_{m}$, the Ando-Hiai inequality can be expressed as  the following log-majorization relationship:
\begin{displaymath}
(A^{p} \sharp_{t} B^{p})^{1/p} \prec_{\log} (A^{q} \sharp_{t} B^{q})^{1/q},\quad 0 < q \leq p \leq 1.
\end{displaymath}

\begin{lemma} \cite[Lemma 2.7]{HK22} \label{L:ineq}
Let $S \in S(\mathcal{H})$ and $X \in \mathbb{P}$. Then $S X S \leq X$ implies $S \leq I$. Furthermore, $S X S = X$ for $S, X \in \mathbb{P}$ if and only if $S = I$.
\end{lemma}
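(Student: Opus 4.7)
The plan is to reduce the operator inequality $SXS \leq X$ to an operator-norm estimate by conjugating with $X^{-1/2}$. First I would set $T := X^{1/2} S X^{-1/2}$, so that $T^{*}T = X^{-1/2}(SXS) X^{-1/2}$. The hypothesis $SXS \leq X$ then becomes $T^{*}T \leq I$, which is equivalent to $\|T\| \leq 1$. Since $X \in \mathbb{P}$ is positive invertible, $X^{1/2}$ and $X^{-1/2}$ are bounded, so $T$ is similar to $S$ via $X^{1/2}$; consequently $\sigma(T) = \sigma(S)$ and in particular $\rho(T) = \rho(S)$.

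Next I would invoke the self-adjointness of $S$: for a bounded self-adjoint operator on a Hilbert space, spectral radius coincides with operator norm. This gives $\|S\| = \rho(S) = \rho(T) \leq \|T\| \leq 1$, whence $-I \leq S \leq I$ and, in particular, $S \leq I$, as claimed.

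For the equality assertion, the implication $S = I \Longrightarrow SXS = X$ is immediate. Conversely, assume $S, X \in \mathbb{P}$ satisfy $SXS = X$. The first part already yields $S \leq I$. Since $S$ is invertible, multiplying the identity $SXS = X$ on both sides by $S^{-1}$ produces $S^{-1} X S^{-1} = X$, so the hypothesis of the first part is satisfied with $S^{-1}$ in place of $S$. This delivers $S^{-1} \leq I$, equivalently $S \geq I$, and combining with $S \leq I$ forces $S = I$.

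I do not expect a serious obstacle; the only subtlety is checking that every step remains valid on a general Hilbert space rather than in the matrix setting. Boundedness of $X^{\pm 1/2}$ is automatic from the definition of $\mathbb{P}$, so $T$ is genuinely similar to $S$ and they share spectrum; and the identity $\|S\| = \rho(S)$ for bounded self-adjoint $S$ is standard. An alternative route I would keep in reserve is to apply the hypothesis to an arbitrary unit vector $\xi$: $\langle SXS\, \xi, \xi\rangle \leq \langle X\xi,\xi\rangle$ with the substitution $\xi = X^{-1/2}\eta$ recovers the same $T^{*}T \leq I$ estimate, so this is really just a repackaging of the conjugation argument.
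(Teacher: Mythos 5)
Your argument is correct, and it is essentially the standard proof of this fact: the paper itself only cites \cite[Lemma 2.7]{HK22} without reproducing an argument, and the similarity trick $T = X^{1/2} S X^{-1/2}$ combined with $\rho(S)=\Vert S\Vert$ for self-adjoint $S$ is exactly how the cited source establishes it. The equality case via applying the first part to both $S$ and $S^{-1}$ is also the intended route, so there is nothing to add.
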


Using this  result, we establish the following inequalities for the new weighted spectral geometric mean $F_{t}(A, B) $.

\begin{proposition} \label{P:Ineq}
Let $A, B \in \mathbb{P}$ and $t \in [0,1]$. Then the following statements hold.
\begin{enumerate}
\item
If $F_{t}(A^{-1}, B) \leq A^{2(t-1)}$, then $ A^{p} \sharp_{t} B^{p} \leq I$ for all $p \geq 1$.
\item $F_{t}(A, B) = A^{2-2t}$ if and only if $B = A^{\frac{1}{t}-1}$ for $t \in (0,1]$.
\end{enumerate}
\end{proposition}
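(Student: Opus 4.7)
The plan is to exploit the sandwich structure of $F_t$ in both parts, peeling off the outer positive factor via Lemma~\ref{L:ineq}.

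For part~(1), I would first substitute $A^{-1}$ for $A$ in the definition of $F_t$ to obtain $F_t(A^{-1}, B) = (A \sharp_t B)^{1/2} A^{2t-2} (A \sharp_t B)^{1/2}$. The hypothesis then rewrites as $(A \sharp_t B)^{1/2} A^{2t-2} (A \sharp_t B)^{1/2} \leq A^{2t-2}$. Applying Lemma~\ref{L:ineq} with the positive $S = (A \sharp_t B)^{1/2}$ and $X = A^{2t-2} \in \mathbb{P}$ yields $(A \sharp_t B)^{1/2} \leq I$, hence $A \sharp_t B \leq I$. The conclusion $A^p \sharp_t B^p \leq I$ for every $p \geq 1$ is then immediate from the Ando--Hiai inequality recalled in \eqref{AH}.

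For part~(2), the implication ``$B = A^{1/t-1} \Rightarrow F_t(A, B) = A^{2-2t}$'' is a one-liner: such $B$ commutes with $A$, so by Proposition~\ref{P:properties}(1), $F_t(A, B) = A^{1-t} B^t = A^{1-t} A^{1-t} = A^{2-2t}$. Conversely, assuming $F_t(A, B) = A^{2-2t}$, we expand to $(A^{-1} \sharp_t B)^{1/2} A^{2-2t} (A^{-1} \sharp_t B)^{1/2} = A^{2-2t}$. Since $A^{2-2t} \in \mathbb{P}$ for every $t \in (0,1]$, the equality clause of Lemma~\ref{L:ineq} forces $(A^{-1} \sharp_t B)^{1/2} = I$, i.e., $A^{-1} \sharp_t B = I$. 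Unfolding the definition $A^{-1} \sharp_t B = A^{-1/2} (A^{1/2} B A^{1/2})^t A^{-1/2}$ gives $(A^{1/2} B A^{1/2})^t = A$, and taking $1/t$-powers and conjugating by $A^{-1/2}$ yields $B = A^{-1/2} A^{1/t} A^{-1/2} = A^{1/t-1}$.

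No serious obstacles arise, as both parts reduce at once to Lemma~\ref{L:ineq} via the observation that $F_t$ is intrinsically a sandwich by a positive operator. The only minor point to watch is that part~(2) requires $t > 0$ for the exponent $1/t$ to be defined, while $t = 1$ is harmless because $A^0 = I$ still belongs to $\mathbb{P}$.
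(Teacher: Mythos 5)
Your proof is correct and follows essentially the same route as the paper: both parts reduce to Lemma~\ref{L:ineq} applied to the sandwich form of $F_t$, with the Ando--Hiai inequality \eqref{AH} finishing part~(1) and the equality clause of the lemma giving $A^{-1}\sharp_t B = I$ in part~(2). Your only addition is spelling out the easy forward direction of part~(2) via Proposition~\ref{P:properties}(1), which the paper folds into its ``if and only if'' appeal to the lemma.
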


\begin{proof}
Note that $(A \sharp_{t} B)^{\frac{1}{2}} A^{2t-2} (A \sharp_{t} B)^{\frac{1}{2}} = F_{t}(A^{-1}, B)  \leq A^{2(t-1)}$. From Lemma \ref{L:ineq},  we have $(A\sharp_tB)^{\frac 12}\le I$, and thus $A \sharp_{t} B \leq I$ for  $t \in [0,1]$. By the Ando-Hiai inequality \eqref{AH}, $A^{p} \sharp_{t} B^{p} \leq I$ for all $p \geq 1$.

Furthermore, $F_{t}(A, B) = A^{2-2t}$ if and only if $A^{-1} \sharp_{t} B = I$ by Lemma \ref{L:ineq}, which is equivalent to $(A^{\frac{1}{2}} B A^{\frac{1}{2}})^{t} = A$. This yields $B = A^{\frac{1}{t}-1}$ for $t \in (0,1]$.
\end{proof}

\begin{corollary}
Let $A, B \in \mathbb{P}$ and $t \in (0,1]$. Then $F_{t}(A, B) \leq A^{2-2t}$ for $A \leq I$ implies $B \leq A^{-1}$.
\end{corollary}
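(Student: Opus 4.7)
The plan is to mimic the argument used in part (1) of Proposition \ref{P:Ineq}, but applied directly to $F_t(A,B)$ rather than to $F_t(A^{-1},B)$. Writing the hypothesis in its defining form
\[
(A^{-1} \sharp_t B)^{1/2}\, A^{2-2t}\, (A^{-1} \sharp_t B)^{1/2} = F_t(A,B) \leq A^{2-2t},
\]
I invoke Lemma \ref{L:ineq} with $S = (A^{-1} \sharp_t B)^{1/2}$ and $X = A^{2-2t}$ to deduce $(A^{-1} \sharp_t B)^{1/2} \leq I$, and hence $A^{-1} \sharp_t B \leq I$.

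Next I unfold the definition of the weighted geometric mean: $A^{-1} \sharp_t B = A^{-1/2}(A^{1/2} B A^{1/2})^t A^{-1/2}$, so the previous inequality rewrites as $(A^{1/2} B A^{1/2})^t \leq A$. Using the second hypothesis $A \leq I$, this weakens to $(A^{1/2} B A^{1/2})^t \leq I$. Since $Y^t \leq I$ is equivalent to $Y \leq I$ for $Y \in \mathbb{P}$ and $t > 0$ (a purely spectral statement via the functional calculus, as $\lambda^t \leq 1 \Leftrightarrow \lambda \leq 1$ for $\lambda > 0$), I conclude $A^{1/2} B A^{1/2} \leq I$, which rearranges to $B \leq A^{-1}$.

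The step I expect to be the main obstacle is passing from $(A^{1/2} B A^{1/2})^t \leq A$ to an inequality on $A^{1/2} B A^{1/2}$ itself. Raising both sides to the power $1/t$ is illegitimate in general since $x \mapsto x^{1/t}$ is not operator monotone for $t \in (0,1)$. The hypothesis $A \leq I$ is precisely what lets me sidestep this difficulty: it allows me to replace $A$ by $I$ on the right-hand side, after which the $1/t$-power can be taken because $I$ commutes with everything and the statement reduces to a scalar one on the spectrum. This also explains why $A \leq I$ cannot be dropped from the hypothesis of the corollary.
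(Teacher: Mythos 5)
Your proof is correct and follows essentially the same route as the paper: apply Lemma \ref{L:ineq} to the congruence $(A^{-1}\sharp_t B)^{1/2}A^{2-2t}(A^{-1}\sharp_t B)^{1/2}\le A^{2-2t}$ to get $A^{-1}\sharp_t B\le I$, rewrite this as $(A^{1/2}BA^{1/2})^t\le A\le I$, and then remove the power $t$ by the scalar functional-calculus argument. The paper phrases the first step as an appeal to Proposition \ref{P:Ineq}, but the substance is identical, and your explicit remark about why $A\le I$ is needed matches the paper's reasoning.
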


\begin{proof}
By Proposition \ref{P:Ineq}, $F_{t}(A, B) \leq A^{2-2t}$  implies $(A^{-1} \sharp_{t} B)^{\frac 12} \leq I$ but not $(A^{-1} \sharp_{t} B) \leq I$. Since $A \leq I$, we have
\begin{displaymath}
(A^{\frac{1}{2}} B A^{\frac{1}{2}})^{t} \leq A \leq I,
\end{displaymath}
which simplifies to $A^{\frac{1}{2}} B A^{\frac{1}{2}} \leq I$, that is, $B \leq A^{-1}$.
\end{proof}

\begin{theorem} \label{T:equivalence}
Let $A, B \in \mathbb{P}$. Then for $t \in \mathbb{R}$,
$$
F_{t}(A, B) \leq I \Longleftrightarrow  (A^{\frac{1}{2}} B A^{\frac{1}{2}})^{t} \leq A^{2t-1}.
$$
\end{theorem}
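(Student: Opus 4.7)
The plan is to prove the equivalence by a chain of if-and-only-if steps obtained from standard Löwner-order manipulations, with no appeal to operator monotonicity or any deeper machinery; the fact that the statement is allowed to range over $t \in \mathbb{R}$ rather than $[0,1]$ is what signals that this is purely formal.

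First I would rewrite the inner geometric mean explicitly. Applying the definition of $\sharp_t$ to the pair $(A^{-1}, B)$ gives
$$
A^{-1} \sharp_t B = A^{-1/2}(A^{1/2} B A^{1/2})^{t} A^{-1/2},
$$
so that, writing $Z := A^{-1} \sharp_t B$, the new weighted spectral geometric mean takes the form
$$
F_t(A,B) = Z^{1/2} A^{2-2t} Z^{1/2}.
$$
The inequality $F_t(A,B) \leq I$ is then equivalent, by conjugation with the invertible positive operator $Z^{-1/2}$, to $A^{2-2t} \leq Z^{-1}$.

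Second, I would compute
$$
Z^{-1} = A^{1/2} (A^{1/2} B A^{1/2})^{-t} A^{1/2},
$$
so the last inequality becomes $A^{2-2t} \leq A^{1/2}(A^{1/2} B A^{1/2})^{-t} A^{1/2}$. Conjugating by $A^{-1/2}$ reduces this to $A^{1-2t} \leq (A^{1/2} B A^{1/2})^{-t}$, and taking inverses (which reverses the Löwner order on positive operators) yields exactly $(A^{1/2} B A^{1/2})^{t} \leq A^{2t-1}$. Since every manipulation in the chain is reversible, the desired equivalence follows.

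I do not expect a real obstacle here. The only thing to keep track of is that each step preserves the if-and-only-if character: the conjugation trick $X^{1/2} Y X^{1/2} \leq I \Leftrightarrow Y \leq X^{-1}$ is an equivalence whenever $X$ is positive invertible, and inversion is an order-reversing involution on $\mathbb{P}$. Since $A \in \mathbb{P}$ and $A^{1/2} B A^{1/2} \in \mathbb{P}$, all fractional powers (including those with negative or arbitrary real exponents) are well-defined via the functional calculus, which is why the restriction $t \in [0,1]$ can be dropped in the statement.
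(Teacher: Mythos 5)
Your proof is correct and takes essentially the same route as the paper's: both reduce $F_t(A,B)\le I$ to an inequality between $A^{-1}\sharp_t B$ and a power of $A$ via congruence by $(A^{-1}\sharp_t B)^{-1/2}$, then finish with an order-reversing inversion and a congruence by $A^{\pm 1/2}$. The only difference is the order in which the inversion and the $A^{1/2}$-congruence are applied, which is immaterial.
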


\begin{proof}
Note that $F_{t}(A, B) \leq I$ if and only if $A^{2(1-t)} \leq (A^{-1} \sharp_{t} B)^{-1}$, which is equivalent to $A^{2(t-1)} \geq A^{-1} \sharp_{t} B$. By taking congruence transformation via $A^{\frac{1}{2}}$, we obtain $A^{2t-1} \geq (A^{\frac{1}{2}} B A^{\frac{1}{2}})^{t}$.
\end{proof}

\begin{corollary} \label{C:upper-bdd}
Let $A, B \in \mathbb{P}$ such that $A \leq \alpha I$ and $B \leq \beta I$ for some $\alpha, \beta > 0$. Then $F_{t}(A, B) \leq \alpha^{1-t} \beta^{t} I$ for $t \in [0,1]$.
\end{corollary}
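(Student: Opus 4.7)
The plan is to reduce the inequality to the unnormalized statement $F_t(\widetilde A,\widetilde B)\le I$ via homogeneity, and then apply Theorem \ref{T:equivalence} together with two standard monotonicity facts for the operator functional calculus.

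First I would normalize by setting $\widetilde A = \alpha^{-1} A$ and $\widetilde B = \beta^{-1} B$, so that $\widetilde A \le I$ and $\widetilde B \le I$. By the homogeneity property \emph{(2)} of Proposition \ref{P:properties},
\begin{displaymath}
F_t(\widetilde A,\widetilde B) \;=\; \alpha^{-(1-t)}\beta^{-t}\, F_t(A,B),
\end{displaymath}
so the desired bound $F_t(A,B)\le \alpha^{1-t}\beta^t I$ is equivalent to $F_t(\widetilde A,\widetilde B)\le I$. By Theorem \ref{T:equivalence}, this in turn is equivalent to
\begin{displaymath}
(\widetilde A^{1/2}\widetilde B\widetilde A^{1/2})^{t} \;\le\; \widetilde A^{\,2t-1}.
\end{displaymath}

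Next I would verify this last inequality by chaining two bounds. Since $\widetilde B\le I$, the congruence gives $\widetilde A^{1/2}\widetilde B\widetilde A^{1/2}\le \widetilde A$, and operator monotonicity of $x\mapsto x^{t}$ on $[0,\infty)$ for $t\in[0,1]$ (the Löwner–Heinz inequality) yields
\begin{displaymath}
(\widetilde A^{1/2}\widetilde B\widetilde A^{1/2})^{t} \;\le\; \widetilde A^{t}.
\end{displaymath}
Then, because $\widetilde A\le I$, its spectrum lies in $(0,1]$, so the scalar function $s\mapsto \widetilde A^{s}$ is decreasing in $s$ (in the Löwner order), and from $t\le 1$ we get $2t-1\le t$, hence $\widetilde A^{t}\le \widetilde A^{\,2t-1}$. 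Concatenating these two bounds gives the required inequality, which, tracing back through Theorem \ref{T:equivalence} and the homogeneity rescaling, completes the proof.

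I do not anticipate a genuine obstacle here; the only point requiring attention is the direction of monotonicity when comparing $\widetilde A^{t}$ with $\widetilde A^{2t-1}$, where the exponent $2t-1$ can be negative. One must invoke that $s\mapsto \widetilde A^{s}$ is Löwner-decreasing precisely because $\widetilde A\le I$; this step would fail without the normalization performed at the outset, which is why reducing to the case $\widetilde A,\widetilde B\le I$ is the natural first move.
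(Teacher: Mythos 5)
Your proof is correct and follows essentially the same route as the paper: reduce to the case $A,B\le I$ via homogeneity, then verify $(A^{1/2}BA^{1/2})^t\le A^t\le A^{2t-1}$ using L\"owner--Heinz and the fact that $s\mapsto A^s$ is decreasing when $A\le I$, and conclude via Theorem \ref{T:equivalence}. The only difference is the order of the two stages (you normalize first, the paper normalizes last), which is immaterial.
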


\begin{proof}
We first assume that $A, B \leq I$. Then we have $A^{\frac{1}{2}} B A^{\frac{1}{2}} \leq A$ since $B \leq I$, and
\begin{displaymath}
(A^{\frac{1}{2}} B A^{\frac{1}{2}})^{t} \leq A^{t} \leq A^{2t-1}.
\end{displaymath}
The first inequality follows from the L\"owner-Heinz inequality, and the last inequality follows from $A \leq I$ and $t \in [0,1]$. Thus, $F_{t}(A, B) \leq I$ by Theorem \ref{T:equivalence}.

For general $A \leq \alpha I$ and $B \leq \beta I$ for some $\alpha, \beta > 0$, let $A_{1} := \alpha^{-1} A \leq I$ and $B_{1} := \beta^{-1} B \leq I$, so $F_{t} (A_{1}, B_{1}) \le I$. Using the joint homogeneity of $F_{t}$, we have
\begin{displaymath}
F_{t} (A_{1}, B_{1}) = \frac{1}{\alpha^{1-t} \beta^{t}} F_{t} (A, B) \leq I.
\end{displaymath}
It yields the desired inequality.
\end{proof}

\begin{remark}
Since $A \leq \Vert A \Vert I$ and $B \leq \Vert B \Vert I$ for the operator norm $\Vert \cdot \Vert$, Corollary \ref{C:upper-bdd} yields $F_{t} (A, B) \leq \Vert A \Vert^{1-t} \Vert B \Vert^{t} I$ for any $t \in [0,1]$, and hence,
\begin{displaymath}
\Vert F_{t} (A, B) \Vert \leq \Vert A \Vert^{1-t} \Vert B \Vert^{t}.
\end{displaymath}
\end{remark}

Switching to the finite-dimensional setting $\mathbb{P}_{m}$, we now provide an inequality relating the trace of the new weighted spectral geometric mean to the traces of the individual matrices.

\begin{theorem} \label{T:tr-F}
Let $A, B \in \mathbb{P}_{m}$. Then $$\tr F_{t}(A, B) \leq (\tr A)^{1-t} (\tr B)^{t}, \quad t \in [0,1]. $$
\end{theorem}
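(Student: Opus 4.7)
The plan is to reduce the inequality to a combination of the trace H\"older inequality and the Lieb--Thirring trace inequality, via a judicious rewriting of $\tr F_{t}(A,B)$.

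First, I would use the explicit formula $A^{-1}\sharp_{t} B = A^{-1/2}(A^{1/2} B A^{1/2})^{t} A^{-1/2}$ together with the cyclicity of the trace to collapse
\[
\tr F_{t}(A, B) = \tr\bigl[(A^{-1}\sharp_{t} B)\, A^{2-2t}\bigr] = \tr\bigl[(A^{1/2} B A^{1/2})^{t} A^{1-2t}\bigr].
\]
Splitting $A^{1-2t}=A^{-t/2}\, A^{1-t}\, A^{-t/2}$ and cycling one factor of $A^{-t/2}$ around the trace gives
\[
\tr F_{t}(A,B)=\tr\bigl[D\, A^{1-t}\bigr],\qquad D:=A^{-t/2}(A^{1/2} B A^{1/2})^{t} A^{-t/2},
\]
a positive definite matrix displayed in a form well suited to Lieb--Thirring.

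For $t\in(0,1)$ (the cases $t=0,1$ being trivial from $F_{0}(A,B)=A$ and $F_{1}(A,B)=B$), I would then apply the trace H\"older inequality with conjugate exponents $p=1/t$ and $q=1/(1-t)$, obtaining
\[
\tr\bigl[D\, A^{1-t}\bigr]\le \bigl(\tr D^{1/t}\bigr)^{t}(\tr A)^{1-t}.
\]
It remains only to show $\tr D^{1/t}\le \tr B$.

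For this last step I would invoke the Lieb--Thirring inequality $\tr(X^{1/2}YX^{1/2})^{r}\le \tr(X^{r/2}Y^{r}X^{r/2})$, valid for positive definite $X,Y$ and $r\ge 1$. Setting $X=A^{-t}$, $Y=(A^{1/2} B A^{1/2})^{t}$, and $r=1/t\ge 1$, the right-hand side telescopes to
\[
\tr\bigl(A^{-1/2}(A^{1/2} B A^{1/2})A^{-1/2}\bigr)=\tr B,
\]
so $\tr D^{1/t}\le \tr B$, and combining with the H\"older step yields $\tr F_{t}(A,B)\le (\tr A)^{1-t}(\tr B)^{t}$. The step I expect to be the main obstacle is recognizing the correct splitting $A^{1-2t}=A^{-t/2}\cdot A^{1-t}\cdot A^{-t/2}$: this is precisely what forces $D$ to have the sandwich shape on which Lieb--Thirring at exponent $1/t$ produces $\tr B$ with no leftover factors, and without which the matching of exponents in H\"older fails. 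Once the splitting is seen, the rest is a mechanical application of two standard trace inequalities.
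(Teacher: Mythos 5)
Your proof is correct, but it takes a genuinely different route from the paper's. The paper starts from the same identity $\tr F_{t}(A,B)=\tr\bigl[A^{\frac12-t}(A^{\frac12}BA^{\frac12})^{t}A^{\frac12-t}\bigr]$, applies Araki--Lieb--Thirring to absorb the outer powers of $A$ into the sandwich, reaching $\tr\bigl[(A^{\frac{1-t}{2t}}BA^{\frac{1-t}{2t}})^{t}\bigr]$, and then invokes an external result (\cite[Theorem 10]{BJL18}) to bound this by the \emph{arithmetic} mean $(1-t)\tr A+t\tr B$; the multiplicative bound $(\tr A)^{1-t}(\tr B)^{t}$ is only recovered afterwards by normalizing to density matrices and using the joint homogeneity $F_{t}(aA,bB)=a^{1-t}b^{t}F_{t}(A,B)$. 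You instead split $A^{1-2t}=A^{-t/2}A^{1-t}A^{-t/2}$, apply the trace H\"older inequality with exponents $1/t$ and $1/(1-t)$ to get the factor $(\tr A)^{1-t}$ directly, and then use Araki--Lieb--Thirring at exponent $r=1/t$ to collapse $\tr D^{1/t}$ exactly to $\tr B$ (the exponents do telescope as you claim: $X^{r/2}=A^{-1/2}$ and $Y^{r}=A^{1/2}BA^{1/2}$). Your argument is more self-contained — it needs only two standard trace inequalities and no normalization step or outside citation — and it produces the geometric-mean bound in one pass; the paper's detour through the arithmetic-mean bound is what it actually needs later anyway (the chain $\tr F_{t}\le(\tr A)^{1-t}(\tr B)^{t}\le(1-t)\tr A+t\tr B$ and the equality analysis of the intermediate steps are reused in the proof that $\Phi$ is a quantum divergence), which explains the authors' choice. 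Do note the trivial cases $t=0,1$ separately, as you do, since the H\"older exponents degenerate there.
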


\begin{proof}
For any $A, B \in \mathbb{P}_{m}$ and $t \in [0,1]$
\begin{equation} \label{E:f-tr}
\tr F_{t}(A, B) = \tr \left[ A^{1-t} (A^{-1} \sharp_{t} B) A^{1-t} \right] = \tr \left[ A^{\frac{1}{2} - t} (A^{\frac{1}{2}} B A^{\frac{1}{2}})^{t} A^{\frac{1}{2} - t} \right].
\end{equation}
By the Araki-Lieb-Thirring inequality in \cite{Ar},
\begin{displaymath}
\begin{split}
\tr F_{t}(A, B) & = \tr \left[ A^{\frac{1}{2} - t} (A^{\frac{1}{2}} B A^{\frac{1}{2}})^{t} A^{\frac{1}{2} - t} \right] \\
& \leq \tr \left[ (A^{\frac{1}{2t} - 1} \cdot A^{\frac{1}{2}} B A^{\frac{1}{2}} \cdot A^{\frac{1}{2t} - 1})^{t} \right] = \tr \left[ (A^{\frac{1-t}{2t}} B A^{\frac{1-t}{2t}})^{t} \right] \\
& \leq (1-t) \, \tr (A) + t \, \tr (B).
\end{split}
\end{displaymath}
The last inequality follows from \cite[Theorem 10]{BJL18}. Replacing $A, B$ by density matrices $\rho = \frac{A}{\tr A}, \sigma = \frac{B}{\tr B}$ in the above inequality, respectively, yields $\tr F_{t}(\rho, \sigma) \leq 1$, and using the joint homogeneity of $F_{t}$ in Proposition \ref{P:properties} (2) we obtain
\begin{displaymath}
\tr F_{t}(A, B) \leq (\tr A)^{1-t} (\tr B)^{t}.
\end{displaymath}
\end{proof}

For $t \in [0,1]$ and $z > 0$ the quantity
\begin{displaymath}
Q_{t,z}(A, B) = (A^{\frac{1-t}{2z}} B^{\frac{t}{z}} A^{\frac{1-t}{2z}})^{z}
\end{displaymath}
is the matrix version of R\'{e}nyi relative entropy. In particular, $Q_{t,t}(A, B)$ is known as the sandwiched R\'{e}nyi relative entropy \cite{WWY}. The quantity $Q_{t,z}$ can be defined for all $t \in \mathbb{R}$. The next theorem establishes a connection between the new weighted spectral geometric mean and R\'enyi relative entropy through log-majorization

\begin{theorem} \label{T:log-majorization}
Let $A, B \in \mathbb{P}_{m}$. Then
\begin{displaymath}
F_{t}(A, B) \prec_{\log} Q_{t,z}(A, B), \quad 0 < z \leq t \leq 1.
\end{displaymath}
\end{theorem}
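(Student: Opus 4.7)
The plan is to establish the stronger operator-norm inequality $\|F_t(A,B)\|\le\|Q_{t,z}(A,B)\|$ and then promote it to full log-majorization by the antisymmetric tensor power technique, using the determinantal identity
\[
\det F_t(A,B)=(\det A)^{1-t}(\det B)^t=\det Q_{t,z}(A,B),
\]
which is immediate from $\det(A^{-1}\sharp_t B)=(\det A)^{t-1}(\det B)^t$ and multiplicativity, to handle equality at the top level of the majorization.

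For the norm inequality, both $F_t(A,\cdot)$ and $Q_{t,z}(A,\cdot)$ are positively homogeneous of degree $t$, so after rescaling $B\mapsto cB$ with $c^t=\|Q_{t,z}(A,B)\|^{-1}$ it suffices to prove the implication
\[
Q_{t,z}(A,B)\le I\ \Longrightarrow\ F_t(A,B)\le I.
\]
I would chase the hypothesis through four operator-monotone steps: (i) peel off the outer $z$-th power (for positive $X$, $X^z\le I\iff X\le I$) to obtain $A^{(1-t)/(2z)}B^{t/z}A^{(1-t)/(2z)}\le I$; (ii) conjugate by $A^{-(1-t)/(2z)}$ to get $B^{t/z}\le A^{-(1-t)/z}$; (iii) apply the L\"owner--Heinz inequality with exponent $z/t\in(0,1]$ to obtain $B\le A^{(t-1)/t}$; (iv) conjugate by $A^{1/2}$ and apply L\"owner--Heinz once more with exponent $t\in[0,1]$ to deduce $(A^{1/2}BA^{1/2})^t\le A^{2t-1}$. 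By Theorem~\ref{T:equivalence}, this last inequality is precisely $F_t(A,B)\le I$.

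To upgrade the norm bound to log-majorization, I would use that products, real powers, inversion, and the weighted geometric mean all commute with the $k$-th antisymmetric tensor power $(\cdot)^{\wedge k}$; the identities $(X^\alpha)^{\wedge k}=(X^{\wedge k})^\alpha$ and $(A\sharp_t B)^{\wedge k}=A^{\wedge k}\sharp_t B^{\wedge k}$ give
\[
F_t(A,B)^{\wedge k}=F_t(A^{\wedge k},B^{\wedge k}),\qquad Q_{t,z}(A,B)^{\wedge k}=Q_{t,z}(A^{\wedge k},B^{\wedge k}).
\]
Applying the already-established norm bound to $A^{\wedge k}$ and $B^{\wedge k}$ and using $\|X^{\wedge k}\|=\prod_{i=1}^{k}\lambda_i(X)$ for positive $X$ yields $\prod_{i=1}^{k}\lambda_i(F_t(A,B))\le\prod_{i=1}^{k}\lambda_i(Q_{t,z}(A,B))$ for $k=1,\dots,m-1$, and the determinantal identity settles $k=m$.

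The main obstacle is identifying the correct chain in step (iii) above: L\"owner--Heinz with exponent $z/t$ is admissible exactly because $z\le t$, so this is the unique point at which the hypothesis of the theorem is used; the remaining steps are routine congruences and L\"owner--Heinz applications with exponents safely in $[0,1]$, and the tensor-power promotion is a standard tool that works precisely because $F_t$ and $Q_{t,z}$ are built from operations compatible with $\wedge^k$.
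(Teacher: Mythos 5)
Your proposal is correct and follows essentially the same route as the paper: the identical reduction via homogeneity, the determinant identity, and compatibility with antisymmetric tensor powers to the implication $Q_{t,z}(A,B)\le I\Rightarrow F_t(A,B)\le I$, and then the same chain of congruences and two L\"owner--Heinz applications (with exponents $z/t$ and $t$) leading to $(A^{1/2}BA^{1/2})^t\le A^{2t-1}$, which is $F_t(A,B)\le I$ by Theorem~\ref{T:equivalence}. You are also right that $z\le t$ enters only through the admissibility of the exponent $z/t$, exactly as in the paper.
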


\begin{proof}
Note that $Q_{t,z}(A, B)$ and $F_{t}(A, B)$ are homogeneous and are preserved by antisymmetric tensor power. Since
$$
\det F_{t}(A, B) = (\det A)^{1-t} (\det B)^{t} = \det (A^{\frac{1-t}{2z}} B^{\frac{t}{z}} A^{\frac{1-t}{2z}})^{z} =\det Q_{t,z}(A, B),
$$ it is sufficient to show that
$$
Q_{t,z}(A, B) \leq I \quad \Longrightarrow \quad F_{t}(A, B) \leq I.
$$
Assume that $Q_{t,z}(A, B) \leq I$. Then $B^{\frac{t}{z}} \leq A^{\frac{t-1}{z}}$ because $z > 0$.
Since $0 < \frac{z}{t} \leq 1$, $B \leq A^{1 - \frac{1}{t}}$ by the L\"owner-Heinz inequality.
This yields $A^{1/2} B A^{1/2} \leq A^{\frac{2t-1}{t}}$, which implies $(A^{1/2} B A^{1/2})^{t} \leq A^{2t-1}$ by the L\"owner-Heinz inequality.
By Theorem \ref{T:equivalence} we obtain $F_{t}(A, B) \leq I$.
\end{proof}

\begin{corollary} \label{C:log-majorization}
Let $A, B \in \mathbb{P}_{m}$ and $\frac{1}{2} \leq t \leq 1$. Then
\begin{displaymath}
F_{t}(A, B) \prec_{\log} Q_{t,t}(A, B) \prec_{\log} A \natural_{t} B \prec_{w \log} A \diamond_{t} B.
\end{displaymath}
\end{corollary}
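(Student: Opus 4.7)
The chain will be established by handling its three comparisons separately. The first one, $F_{t}(A,B) \prec_{\log} Q_{t,t}(A,B)$, is an immediate consequence of Theorem~\ref{T:log-majorization} upon setting $z = t$, which is admissible for every $t \in [1/2, 1]$ since then $0 < t \leq t \leq 1$.

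For the middle comparison $Q_{t,t}(A, B) \prec_{\log} A \natural_{t} B$, the plan is to follow the standard Ando--Hiai reduction. Since $Q_{t,t}$ and $A \natural_{t} B$ are both jointly $(1-t, t)$-homogeneous, commute with antisymmetric tensor powers, and share the common determinant $(\det A)^{1-t}(\det B)^{t}$, the log-majorization reduces to the operator implication
\begin{displaymath}
A \natural_{t} B \leq I \ \Longrightarrow\ Q_{t,t}(A, B) \leq I, \qquad t \in [1/2, 1].
\end{displaymath}
Conjugating the hypothesis by $(A^{-1}\sharp B)^{-t}$ gives $(A^{-1}\sharp B)^{2t} \leq A^{-1}$, and L\"owner--Heinz with exponent $1/(2t) \in (0, 1]$ yields $A^{-1}\sharp B \leq A^{-1/(2t)}$. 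Writing $A^{-1}\sharp B = A^{-1/2}(A^{1/2}BA^{1/2})^{1/2}A^{-1/2}$ and conjugating by $A^{1/2}$ then produces $(A^{1/2} B A^{1/2})^{1/2} \leq A^{(2t-1)/(2t)}$. Since $Q_{t,t}(A,B) \leq I$ is equivalent to $A^{1/2}BA^{1/2} \leq A^{(2t-1)/t}$, it suffices to pass from this square-root bound to its squared form.

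The third comparison $A \natural_{t} B \prec_{w\log} A \diamond_{t} B$ will be invoked from the known literature on the spectral geometric and $\diamond_{t}$ means, in which this weak log-majorization has already been established.

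The main obstacle is the squaring step in the middle link: the operator inequality $(A^{1/2}BA^{1/2})^{1/2} \leq A^{(2t-1)/(2t)}$ does not in general square to $A^{1/2} B A^{1/2} \leq A^{(2t-1)/t}$, because L\"owner--Heinz monotonicity fails at exponent $2 > 1$ (and simple counterexamples exist even in $2\times 2$). Bridging this gap must exploit the extra structure coming from the Riccati identity $B = (A^{-1}\sharp B)\, A\, (A^{-1}\sharp B)$, which ties $B$ to the specific operator $A^{-1}\sharp B$ that we have already bounded. The cleanest route I foresee is via the Furuta inequality (or an equivalent Ando--Hiai-type log-majorization strengthening), which supplies precisely the monotonicity at exponents above one that L\"owner--Heinz lacks.
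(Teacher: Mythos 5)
Your first and third links match the paper: the first is Theorem~\ref{T:log-majorization} with $z=t$, and for the last the paper likewise just cites the literature (\cite[Theorem 4.4]{GK24}). The issue is the middle link $Q_{t,t}(A,B)\prec_{\log} A\natural_t B$. The paper does not prove it either; it simply invokes \cite[Corollary 8]{DF}, which gives $Q_{t,z}\prec_{\log} A\natural_t B$ for $z\ge\max\{t,1-t\}$, and notes that $z=t$ qualifies when $t\ge 1/2$. You instead attempt a direct Ando--Hiai reduction and, as you yourself flag, stop at a genuine gap: from $A\natural_t B\le I$ you derive only $(A^{1/2}BA^{1/2})^{1/2}\le A^{(2t-1)/(2t)}$, and squaring is not operator monotone, so this does not yield the needed $A^{1/2}BA^{1/2}\le A^{(2t-1)/t}$. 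As written, the middle log-majorization is therefore not established.

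The gap is fillable along exactly the line you gesture at, and no Furuta-type machinery is needed. Put $C:=A^{-1}\sharp B$, so that $CAC=B$ by the Riccati characterization. The hypothesis $A\natural_t B=C^tAC^t\le I$ gives $A\le C^{-2t}$, and conjugating this by $C$ yields $B=CAC\le C^{2-2t}$. Separately, $C^{2t}\le A^{-1}$ together with L\"owner--Heinz at exponent $1/(2t)\in(0,1]$ gives $C\le A^{-1/(2t)}$, and since $2-2t\in[0,1]$ for $t\ge 1/2$, another application of L\"owner--Heinz gives $C^{2-2t}\le A^{-(1-t)/t}$. Hence $B\le A^{(t-1)/t}$, which is precisely $Q_{t,t}(A,B)\le I$. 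Inserting this two-line argument in place of your "squaring" step makes your reduction complete and gives a self-contained proof of the middle link, which is arguably a small improvement over the paper's proof by citation; but in the form you submitted, the key implication is missing.
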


\begin{proof}
The first log-majorization follows by Theorem \ref{T:log-majorization} when $z = t$, and the last weak log-majorization follows by \cite[Theorem 4.4]{GK24}.
From \cite[Corollary 8]{DF},
\begin{displaymath}
Q_{t,z}(A, B) \prec_{\log} A \natural_{t} B
\end{displaymath}
when $z \geq \max \{ t, 1-t \}$ for $t \in [0,1]$. Thus, the preceding log-majorization holds when $\frac{1}{2} \leq t \leq 1$ and $z \geq t$. The proof is complete by choosing $z = t$.
\end{proof}

\section{Quantum divergence and barycenter}

Bhatia, Gaubert and Jain \cite{BGJ} introduced a notion of quantum divergence on the Riemannian manifold $\mathbb{P}_{m}$. A map $\Phi : \mathbb{P}_m \times \mathbb{P}_m \to \mathbb{R}$ is called a \emph{quantum divergence} if it satisfies the following properties:
\begin{itemize}
\item[(I)] $\Phi(A, B) \geq 0$, and equality holds if and only if $A = B$;
\item[(II)] The derivative $D \Phi$ of $\Phi$ with respect to the second variable vanishes on the diagonal, that is,
\begin{displaymath}
D \Phi(A, B) |_{B=A} = 0;
\end{displaymath}
\item[(III)] The second derivative is non-negative on the diagonal, that is,
\begin{displaymath}
D^{2} \Phi(A, B) |_{B=A} [Y,Y] \geq 0
\end{displaymath}
for any Hermitian matrix $Y$.
\end{itemize}

Gan, Jeong and Kim showed in \cite{GJK} that the map
\begin{displaymath}
\Psi(A, B) = \tr (A \nabla_{t} B - A \natural_{t} B)
\end{displaymath}
is a quantum divergence for $t \in [0, \frac{1}{2}]$, and studied the barycenter minimizing the weighted sum of quantum divergences.

\begin{theorem}\label{div}
The map $\Phi: \mathbb{P}_{m} \times \mathbb{P}_{m} \to \mathbb{R}$ defined by
\begin{equation} \label{E:q-divergence}
\Phi(A, B) = \tr [A \nabla_{t} B - F_{t}(A, B)]
\end{equation}
is a quantum divergence for $t \in [0,1]$.
\end{theorem}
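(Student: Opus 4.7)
My plan is to verify the three defining properties (I)--(III) of a quantum divergence for $\Phi(A,B) = \tr[A \nabla_t B - F_t(A,B)]$, focusing on $t \in (0,1)$; the cases $t \in \{0,1\}$ give $\Phi \equiv 0$ and are trivial.

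\emph{Property (I).} Chaining the scalar AM--GM inequality with Theorem~\ref{T:tr-F} gives
\[
\tr F_t(A,B) \leq (\tr A)^{1-t}(\tr B)^t \leq (1-t)\tr A + t\tr B = \tr[A\nabla_t B],
\]
so $\Phi \geq 0$. A direct computation yields $A^{-1}\sharp_t A = A^{2t-1}$, hence $F_t(A,A) = A^{t-1/2}A^{2-2t}A^{t-1/2} = A$ and $\Phi(A,A)=0$. Conversely, $\Phi(A,B)=0$ forces equality in scalar AM--GM (so $\tr A=\tr B$) and in the Araki--Lieb--Thirring and \cite[Theorem~10]{BJL18} steps inside the proof of Theorem~\ref{T:tr-F}, which together yield $A=B$.

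\emph{Property (II).} From \eqref{E:f-tr} and cyclicity, $\tr F_t(A,B) = \tr[A^{1-2t}(A^{1/2}BA^{1/2})^t]$. At $B=A$ the base point $X_0 := A^2$ commutes with $A^{1-2t}$, so the first-order Daleckii--Krein formula collapses: with $H = A^{1/2}YA^{1/2}$,
\[
D\tr F_t(A,\cdot)(A)[Y] = t\,\tr\bigl[A^{1-2t}X_0^{t-1}H\bigr] = t\,\tr[A^{-1}H] = t\,\tr Y,
\]
where off-diagonal divided-difference terms are annihilated by $A^{1-2t}$ being diagonal in the eigenbasis of $A$. Since $D\tr[A\nabla_t B]|_{B=A}[Y] = t\tr Y$ as well, the two first derivatives cancel and $D\Phi|_{B=A}\equiv 0$.

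\emph{Property (III) and the main obstacle.} Since $A\nabla_t B$ is affine in $B$, it suffices to show $D^2\tr F_t(A,\cdot)(A)[Y,Y]\le 0$. The second-order Daleckii--Krein formula applied to $\phi(s)=s^t$ at $X_0=A^2$, together with the commutativity $[A^{1-2t},X_0]=0$, reduces the Hessian in the eigenbasis of $A$ (with eigenvalues $a_i$) to
\[
D^2\tr F_t(A,\cdot)(A)[Y,Y] = 2\sum_{i,k}a_i^{1-2t}\,\phi^{[2]}(a_i^2,a_k^2,a_i^2)\,|H_{ik}|^2,
\]
where $\phi^{[2]}$ is the second divided difference. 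Everything then rests on the scalar inequality $\phi^{[2]}(x,x,y)\le 0$ for $x,y>0$ and $t\in(0,1)$. Setting $g(y):=tx^{t-1}(x-y)-x^t+y^t$, so that $(x-y)^2\phi^{[2]}(x,x,y)=g(y)$, one checks $g(x)=0$ and $g'(y)=t(y^{t-1}-x^{t-1})$, which is positive for $y<x$ and negative for $y>x$; thus $g$ attains its maximum $0$ at $y=x$, giving $\phi^{[2]}(x,x,y)\le 0$ and hence $D^2\Phi|_{B=A}[Y,Y]\ge 0$. The main obstacle is setting up this Hessian cleanly: one must invoke the second divided-difference formula for $X\mapsto X^t$ and exploit commutativity to collapse the generic triple sum $\phi^{[2]}(\lambda_i,\lambda_k,\lambda_j)$ to the symmetric slice $\phi^{[2]}(\lambda_i,\lambda_k,\lambda_i)$; once this is done, the required nonpositivity is the elementary one-variable calculus argument above.
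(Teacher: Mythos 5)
Your verification of properties (II) and (III) is correct and takes a genuinely different route from the paper: you work with the Daleckii--Krein divided-difference formulas for $B \mapsto \tr[A^{1-2t}(A^{1/2}BA^{1/2})^t]$ and exploit the fact that $A^{1-2t}$ commutes with the base point $A^2$ to collapse the sums, reducing (III) to the elementary inequality $\phi^{[2]}(x,x,y)\le 0$ for the concave function $\phi(s)=s^t$. The paper instead differentiates the integral representation $X^t=\frac{\sin t\pi}{\pi}\int_0^\infty(\lambda X^{-1}+I)^{-1}\lambda^{t-1}\,d\lambda$ and verifies positivity of the resulting integrand by hand. Your route is cleaner and is consistent with the paper's own Theorem~\ref{T:st-concavity} (concavity of $X\mapsto\tr F_t(A,X)$), which already implies (III); the paper's integral computation, on the other hand, is what later produces the explicit barycenter equation, so it earns its keep elsewhere.

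There is, however, a genuine gap in your treatment of the positive-definiteness half of property (I). You assert that $\Phi(A,B)=0$ ``forces equality in the Araki--Lieb--Thirring and \cite[Theorem~10]{BJL18} steps \ldots which together yield $A=B$,'' but this is exactly the nontrivial content and you do not supply it. The equality case of the Araki--Lieb--Thirring inequality (essentially, that equality forces commutativity of the factors) is a known but nonobvious result that you neither cite nor prove, and the equality condition of \cite[Theorem~10]{BJL18} is likewise not addressed; moreover, equality in a chain of trace inequalities obtained \emph{after} normalizing to density matrices has to be unwound carefully before it says anything about the unnormalized $A$ and $B$. The paper avoids this entirely by splitting into two cases: for $t\in[0,\tfrac12]$ it bounds $\tr F_t(\rho,\sigma)$ by $(1-2t)+2t\,\tr(\rho^{1/2}\sigma\rho^{1/2})^{1/2}$ and invokes the equality case of quantum fidelity, while for $t\in[\tfrac12,1]$ it uses the log-majorization chain $F_t\prec_{\log}Q_{t,t}\prec_{\log}A\natural_t B\prec_{w\log}A\diamond_t B\le A\nabla_t B$ and identifies $\tr[A\nabla_tB-A\diamond_tB]$ with $t(1-t)d_W^2(A,B)$, whose vanishing gives $A=B$. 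You need to either carry out the equality analysis you allude to (with a precise statement of the relevant equality conditions) or substitute an argument of this kind. A smaller point: at $t\in\{0,1\}$ the map is identically zero, so property (I) actually \emph{fails} there rather than holding trivially; this defect is inherited from the statement itself, but ``trivial'' is not the right word.
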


\begin{proof} We verify the three conditions of quantum divergence.

From Theorem \ref{T:tr-F}, we have
$$
\tr F_{t}(A, B) \leq (\tr A)^{1-t} (\tr B)^{t} \leq (1-t) \tr A + t \tr B.
$$
Therefore,
$$\Phi(A, B)= \tr [A \nabla_{t} B - F_{t}(A, B)] \geq 0$$
with equality if and only if $A=B$.

To prove positive definiteness of $\Phi$, consider:

\begin{itemize}
  \item[(a)] Case 1: $t \in [0, \frac{1}{2}]$. If $\Phi(A, B) = 0$, then $(\tr A)^{1-t} (\tr B)^{t} = (1-t) \tr A + t \tr B$, and $\tr F_{t}(\rho, \sigma) = 1$ for positive definite density matrices $\rho = \frac{A}{\tr A}$ and $\sigma = \frac{B}{\tr B}$. By the definition of (metric) geometric mean on \eqref{E:f-tr}, we have
\begin{displaymath}
\begin{split}
\tr F_{t}(\rho, \sigma) & = \tr \left[ \rho^{\frac{1-2t}{2}} (\rho^{\frac{1}{2}} \sigma \rho^{\frac{1}{2}})^{t} \rho^{\frac{1-2t}{2}} \right] = \tr \left[ \rho^{1-2t} \left( (\rho^{\frac{1}{2}} \sigma \rho^{\frac{1}{2}})^{\frac{1}{2}} \right)^{2t} \right] \\
& \leq (1-2t) \tr \rho + 2t \tr (\rho^{\frac{1}{2}} \sigma \rho^{\frac{1}{2}})^{\frac{1}{2}} \\
& \leq 1.
\end{split}
\end{displaymath}
The first inequality follows from the fact that
\begin{displaymath}
\tr\, (A \sharp_{t} B) \leq \tr (A^{1-t} B^{t}) \leq (1-t) \tr A + t \tr B,\quad t \in [0,1],
\end{displaymath}
 and the second inequality follows from the property of quantum fidelity. Since $\tr F_{t}(\rho, \sigma) = 1$, we have
\begin{displaymath}
\tr\, (\rho^{\frac{1}{2}} \sigma \rho^{\frac{1}{2}})^{\frac{1}{2}} = 1.
\end{displaymath}
So $\tr A = \tr B$, and $\rho = \sigma$. Thus, $A = B$.

  \item[(b)] Case 2: $t \in [\frac{1}{2}, 1]$. Note 
  that
\begin{displaymath}
F_{t}(A, B) \prec_{\log} Q_{t,t}(A, B) \prec_{\log} A \natural_{t} B \prec_{w \log} A \diamond_{t} B \leq A \nabla_{t} B,
\end{displaymath}
where the last inequality follows by \cite[Theorem 6]{BJL19}. So we have
\begin{displaymath}
\tr F_{t}(A, B) \leq \tr Q_{t,t}(A, B) \leq \tr (A \natural_{t} B) \leq \tr (A \diamond_{t} B) \leq \tr(A \nabla_{t} B).
\end{displaymath}
Thus, $\Phi(A, B) = 0$ implies $\tr (A \nabla_{t} B) = \tr (A \natural_{t} B) = \tr (A \diamond_{t} B)$. Since
\begin{displaymath}
\tr \left[ A \nabla_{t} B - A \diamond_{t} B \right] = t(1-t) \tr [(A + B) - 2 (A^{1/2} B A^{1/2})^{1/2}] = t(1-t) d_{W}^{2}(A, B),
\end{displaymath}
where $d_{W}$ denotes the Bures-Wasserstein distance, we obtain $A = B$.
\end{itemize}

\noindent (ii) One can see from \eqref{E:f-tr} that
\begin{equation*}
  D\Phi(A,B)(B)
  = \tr (tI - A^{1/2-t} D(A^{1/2} B A^{1/2})^{t}(B) A^{1/2-t}).
\end{equation*}
Using the following integral representation:
\begin{equation*}
  A^t = \frac{\sin t\pi}{\pi} \int_{0}^{\infty} (\lambda A^{-1} + I)^{-1} \lambda^{t-1} d\lambda,\quad t \in (0,1),
\end{equation*}
we have
\begin{displaymath}
\begin{split}
  & D(A^{1/2} B A^{1/2})^{t}(B) \\
  &= \frac{\sin t\pi}{\pi} \int_{0}^{\infty} (\lambda A^{-1/2} B^{-1} A^{-1/2} + I)^{-1} A^{-1/2} B^{-2} A^{-1/2} (\lambda A^{-1/2} B^{-1} A^{-1/2} + I)^{-1} \lambda^{t} d\lambda.
  \end{split}
\end{displaymath}
Thus, we obtain
\begin{align*}
  D\Phi(A,B)(B)|_{B=A}
  &= \tr \left(tI - A^{1/2-t} \frac{\sin t\pi}{\pi} \int_{0}^{\infty} (\lambda A^{-2} + I)^{-1} A^{-3} (\lambda A^{-2} + I)^{-1} \lambda^{t} d\lambda A^{1/2-t} \right) \\
  &= \tr \left(tI - A^{1-t} \frac{\sin t\pi}{\pi} \int_{0}^{\infty} (\lambda I + A^{2})^{-2} \lambda^{t} d\lambda A^{1-t} \right) \\
  &= \tr\, (tI - tA^{2(1-t)+2t-2}) = 0,
\end{align*}
where the last equality follows from the fact that for $t \in (0,1)$
\begin{equation*}
  tA^{t-1} = D(A^{t}) = \frac{\sin t\pi}{\pi} \int_{0}^{\infty} (\lambda I + A)^{-2} \lambda^{t} d\lambda.
\end{equation*}

\noindent (iii) Let $Y \in \mathbb{H}_{m}$. Set $X := \lambda A^{-1/2} B^{-1} A^{-1/2} + I$.
From the above observation, one can see that
\begin{align*}
  D\Phi(A,B)(B)[Y]
  &= \tr\, (tY - A^{1/2-t} D(A^{1/2} B A^{1/2})^{t}(B)[Y] A^{1/2-t}) \\
  &= \tr \left( tY - A^{1/2-t} \frac{\sin t\pi}{\pi} \int_{0}^{\infty} X^{-1} A^{-1/2} B^{-1} Y B^{-1} A^{-1/2} X^{-1} \lambda^{t} d\lambda A^{1/2-t} \right).
\end{align*}
Then, we see that
\begin{align*}
  D^{2}&\Phi(A,B)(B,B)[Y,Y] \\
  = & - \tr \left( A^{1/2-t} \frac{\sin t\pi}{\pi} \int_{0}^{\infty} Z A^{-1/2} B^{-1} Y B^{-1} A^{-1/2} X^{-1} \lambda^{t} d\lambda A^{1/2-t} \right) \\
  & + \tr \left( A^{1/2-t} \frac{\sin t\pi}{\pi} \int_{0}^{\infty} X^{-1} A^{-1/2} B^{-1} Y B^{-1} Y B^{-1} A^{-1/2} X^{-1} \lambda^{t} d\lambda A^{1/2-t} \right) \\
  & + \tr \left( A^{1/2-t} \frac{\sin t\pi}{\pi} \int_{0}^{\infty} X^{-1} A^{-1/2} B^{-1} Y B^{-1} Y B^{-1} A^{-1/2} X^{-1} \lambda^{t} d\lambda A^{1/2-t} \right) \\
  & - \tr \left( A^{1/2-t} \frac{\sin t\pi}{\pi} \int_{0}^{\infty} X^{-1} A^{-1/2} B^{-1} Y B^{-1} A^{-1/2} Z \lambda^{t} d\lambda A^{1/2-t} \right),
\end{align*}
where $Z := D(X^{-1})(B)[Y]$.
We denote the four terms in the above summation as \RomanNumeralCaps{1}, \RomanNumeralCaps{2}, \RomanNumeralCaps{3} and \RomanNumeralCaps{4}, respectively.
If $B=A$, then using the fact that
\begin{displaymath}
D(X^{-1})(B)[Y]|_{B=A} = \lambda (\lambda A^{-2}+I)^{-1} A^{-3/2} Y A^{-3/2} (\lambda A^{-2}+I)^{-1}
\end{displaymath}
we obtain
\begin{align*}
  & \textrm{\RomanNumeralCaps{1}} = \textrm{\RomanNumeralCaps{4}} \\
  &= -\tr \left( A^{1-t} \frac{\sin t\pi}{\pi} \int_{0}^{\infty} (\lambda I + A^{2})^{-1} Y A^{-1/2} (\lambda I + A^{2})^{-1} A^{-1/2} Y (\lambda I + A^{2})^{-1} \lambda^{t+1} d\lambda A^{1-t} \right)  \\
  & \textrm{\RomanNumeralCaps{2}} = \textrm{\RomanNumeralCaps{3}} \\
  &= \tr \left( A^{1-t} \frac{\sin t\pi}{\pi} \int_{0}^{\infty} (\lambda I + A^{2})^{-1} Y A^{-1} Y (\lambda I + A^{2})^{-1} \lambda^{t} d\lambda A^{1-t} \right).
\end{align*}
Thus, we have
\begin{align*}
  & D^{2}\Phi(A,B)(B,B)[Y,Y]|_{B=A} \\
  &= 2 \tr \left( \frac{\sin t\pi}{\pi} \int_{0}^{\infty} A^{1-t} (\lambda I + A^{2})^{-1} Y A^{-1} Y (\lambda I + A^{2})^{-1} A^{1-t} \lambda^{t} d\lambda \right) \\
  & \quad -2 \tr \left( \frac{\sin t\pi}{\pi} \int_{0}^{\infty} A^{1-t} (\lambda I + A^{2})^{-1} Y A^{-1/2} (\lambda I + A^{2})^{-1} A^{-1/2} Y (\lambda I + A^{2})^{-1} A^{1-t} \lambda^{t+1} d\lambda \right) \\
  &= 2 \tr \left( \frac{\sin t\pi}{\pi} \int_{0}^{\infty} A^{1-t} (\lambda I + A^{2})^{-1} Y H Y (\lambda I + A^{2})^{-1} A^{1-t} \lambda^{t+1} d\lambda \right),
\end{align*}
where $H := A^{-1} - \lambda A^{-1/2} (\lambda I + A^{2})^{-1} A^{-1/2}$.
Since $I + \lambda^{-1} A^{2} > I$ for any $\lambda > 0$, $I - ( I + \lambda^{-1} A^{2} )^{-1} > 0$.
Applying the congruence transformation via $A^{-1/2}$ on both sides yields that
\begin{equation*}
  A^{-1} - \lambda A^{-1/2} (\lambda I + A^{2})^{-1} A^{-1/2}
  = A^{-1/2} \left\{ I - \left( I + \frac{1}{\lambda} A^{2} \right)^{-1} \right\} A^{-1/2} > 0.
\end{equation*}
Hence,  $D^{2}\Phi(A,B)(B,B)[Y,Y]|_{B=A} \geq 0$ for any $Y \in \mathbb{H}_{m}$.
\end{proof}

\begin{proposition} \label{P:invariance}
The quantum divergence $\Phi$ given by \eqref{E:q-divergence} is invariant under unitary congruence transformation and tensor product with a density matrix.
\end{proposition}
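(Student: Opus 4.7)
The plan is to separate the proposition into its two assertions and verify each by showing that both terms in $\Phi(A,B) = \tr[A\nabla_t B - F_t(A,B)]$ transform compatibly under the operation in question. For unitary congruence the argument is immediate: the arithmetic mean satisfies $U(A\nabla_t B)U^* = (UAU^*)\nabla_t(UBU^*)$, Proposition \ref{P:properties}(3) gives $F_t(UAU^*, UBU^*) = U F_t(A,B) U^*$, and the cyclicity of the trace handles both terms. So the unitary case reduces to quoting earlier results.

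For the tensor product assertion, given a density matrix $\rho$ the first step is to check compatibility of the arithmetic mean, which is obvious from bilinearity: $(A\otimes\rho)\nabla_t(B\otimes\rho) = (A\nabla_t B)\otimes\rho$. The main task is therefore to establish the tensor identity
\begin{equation*}
F_t(A\otimes\rho,\, B\otimes\rho) = F_t(A,B)\otimes\rho,
\end{equation*}
after which taking the trace and using $\tr\rho=1$ yields $\Phi(A\otimes\rho, B\otimes\rho)=\Phi(A,B)$.

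To prove the tensor identity I would use the standard facts that $(X\otimes Y)^p = X^p\otimes Y^p$ for positive definite factors and that the Kubo--Ando geometric mean is multiplicative over tensor products, i.e.\ $(X_1\otimes Y_1)\sharp_t(X_2\otimes Y_2) = (X_1\sharp_t X_2)\otimes(Y_1\sharp_t Y_2)$. Applied to the inner factor this gives $(A\otimes\rho)^{-1}\sharp_t(B\otimes\rho) = (A^{-1}\sharp_t B)\otimes(\rho^{-1}\sharp_t\rho)$, and a direct computation from the definition of $\sharp_t$ shows $\rho^{-1}\sharp_t\rho = \rho^{2t-1}$. Combining with $(A\otimes\rho)^{2-2t} = A^{2-2t}\otimes\rho^{2-2t}$ and expanding the defining expression
\begin{equation*}
F_t(A\otimes\rho,\,B\otimes\rho) = \bigl((A\otimes\rho)^{-1}\sharp_t(B\otimes\rho)\bigr)^{1/2}(A\otimes\rho)^{2-2t}\bigl((A\otimes\rho)^{-1}\sharp_t(B\otimes\rho)\bigr)^{1/2},
\end{equation*}
the second tensor slot collapses to $\rho^{(2t-1)/2}\cdot\rho^{2-2t}\cdot\rho^{(2t-1)/2} = \rho$, while the first slot is exactly $F_t(A,B)$.

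There is no real obstacle here; the only mildly delicate point is the elementary computation of $\rho^{-1}\sharp_t\rho$, and once that is noted the exponents conveniently telescope to $1$. I would close by remarking that, by the multiplicative property of the trace on tensors, the same argument actually works for $A\otimes C$, $B\otimes C$ with $C\in\mathbb{P}_k$ arbitrary, producing $\Phi(A\otimes C, B\otimes C) = (\tr C)\cdot\Phi(A,B)$, with the stated invariance being the density-matrix specialization.
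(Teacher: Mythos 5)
Your proof is correct and follows essentially the same route as the paper: unitary invariance via Proposition \ref{P:properties}(3) and cyclicity of the trace, and tensor invariance via the identity $F_t(A\otimes\rho,B\otimes\rho)=F_t(A,B)\otimes\rho$ together with $\tr\rho=1$. The only difference is that the paper simply asserts the multiplicativity $F_t(A\otimes C,B\otimes D)=F_t(A,B)\otimes F_t(C,D)$ without proof, whereas you verify the needed special case directly from the tensor multiplicativity of $\sharp_t$ and the computation $\rho^{-1}\sharp_t\rho=\rho^{2t-1}$ --- a worthwhile detail to supply, and your closing remark that $\Phi(A\otimes C,B\otimes C)=(\tr C)\,\Phi(A,B)$ for arbitrary positive definite $C$ is a correct mild generalization.
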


\begin{proof}
By Proposition \ref{P:properties} (3),
\begin{displaymath}
\Phi(U A U^{*}, U B U^{*}) = \tr [U (A \nabla_{t} B - F_{t}(A, B)) U^{*}] = \tr [A \nabla_{t} B - F_{t}(A, B)] = \Phi(A, B)
\end{displaymath}
for any unitary matrix $U$. Since
\begin{displaymath}
F_{t}(A \otimes C, B \otimes D) = F_{t}(A, B) \otimes F_{t}(C, D)
\end{displaymath}
for any $A, B, C, D \in \mathbb{P}_{m}$, the bi-linearity of the tensor product ensures
\begin{displaymath}
\Phi(A \otimes \rho, B \otimes \rho) = \tr [(A \nabla_{t} B - F_{t}(A, B)) \otimes \rho]
= \tr [A \nabla_{t} B - F_{t}(A, B)] \tr \rho = \Phi(A, B)
\end{displaymath}
for any density matrix $\rho$.
\end{proof}

\begin{theorem} \label{T:st-concavity}
For a given $A \in \mathbb{P}_{m}$ and $t \in (0,1)$, the map $f: \mathbb{P}_{m} \to \mathbb{R}$ defined by $f(X) = \tr F_{t}(A, X)$ is strictly concave.
\end{theorem}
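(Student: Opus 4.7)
The plan is to reduce the question to strict concavity of a map of the form $Y \mapsto \tr[C\,Y^t]$ on $\mathbb{P}_m$, and then verify the Hessian is strictly negative via an integral representation. First, by the cyclic property of trace applied to the expression already recorded in \eqref{E:f-tr},
\begin{displaymath}
f(X) = \tr F_t(A, X) = \tr\bigl[A^{1-2t}(A^{1/2} X A^{1/2})^t\bigr].
\end{displaymath}
The map $X \mapsto A^{1/2} X A^{1/2}$ is a linear bijection of $\mathbb{P}_m$ onto itself, so under the substitution $Y := A^{1/2} X A^{1/2}$, strict concavity of $f$ in $X$ is equivalent to strict concavity of $h(Y) := \tr[C\,Y^t]$ in $Y$, where $C := A^{1-2t} \in \mathbb{P}_m$.

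To prove strict concavity of $h$, I plan to use the integral representation
\begin{displaymath}
Y^t = \frac{\sin t\pi}{\pi} \int_0^{\infty} \lambda^{t-1}\, Y(Y+\lambda I)^{-1}\, d\lambda, \qquad t \in (0,1),
\end{displaymath}
together with the identity $Y(Y+\lambda I)^{-1} = I - \lambda(Y+\lambda I)^{-1}$. Differentiating twice under the integral sign, by a computation patterned on the Hessian analysis already carried out in the proof of Theorem \ref{div}, gives
\begin{displaymath}
D^2 h(Y)[H,H] = -\frac{2 \sin t\pi}{\pi} \int_0^{\infty} \lambda^t \, \tr\bigl[C\, T_\lambda H T_\lambda H T_\lambda\bigr]\, d\lambda,
\end{displaymath}
where $T_\lambda := (Y+\lambda I)^{-1}$.

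The main step is to show the integrand is strictly positive for each $\lambda > 0$ when $H \in \mathbb{H}_m$ is nonzero. Using the cyclic trace property to symmetrize,
\begin{displaymath}
\tr\bigl[C\, T_\lambda H T_\lambda H T_\lambda\bigr] = \tr\bigl[\widetilde{C}\, N^2\bigr],
\end{displaymath}
where $\widetilde{C} := T_\lambda^{1/2} C T_\lambda^{1/2}$ is positive definite and $N := T_\lambda^{1/2} H T_\lambda^{1/2}$ is Hermitian. Since $T_\lambda$ is invertible, $N \neq 0$ whenever $H \neq 0$, so $N^2$ is positive semidefinite and nonzero; hence $\tr[\widetilde{C}\,N^2] = \Vert \widetilde{C}^{1/2} N \Vert_F^2 > 0$. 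Combined with $\sin t\pi / \pi > 0$ for $t \in (0,1)$, this gives $D^2 h(Y)[H,H] < 0$ for every nonzero $H$, which establishes strict concavity of $h$ and hence of $f$.

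The principal obstacle is mostly bookkeeping: justifying differentiation under the integral (routine via the spectral decomposition of $Y$) and recasting the quintuple-product trace in the symmetric form $\tr[\widetilde{C} N^2]$. Once that is done, the strict positivity conclusion parallels the Frobenius-norm argument already employed in part (iii) of the proof of Theorem \ref{div}.
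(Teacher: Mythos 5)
Your proof is correct, but it takes a genuinely different route from the paper's. The paper works from $f(X)=\tr[A^{1-t}(A^{-1}\sharp_t X)A^{1-t}]$ and invokes the (joint) concavity of the weighted geometric mean, $A^{-1}\sharp_t((1-\lambda)X+\lambda Y)\geq(1-\lambda)\,A^{-1}\sharp_t X+\lambda\,A^{-1}\sharp_t Y$, passing the operator inequality through the positive linear functional $Z\mapsto\tr[A^{1-t}ZA^{1-t}]$; strictness is then asserted from the equality case of that operator concavity. You instead reduce to $h(Y)=\tr[CY^t]$ with $C=A^{1-2t}\succ0$ via the cyclic trace identity and the linear bijection $Y=A^{1/2}XA^{1/2}$, and compute the Hessian explicitly from the integral representation of $Y^t$; your symmetrization $\tr[CT_\lambda HT_\lambda HT_\lambda]=\tr[\widetilde C N^2]=\Vert\widetilde C^{1/2}N\Vert_F^2$ with $\widetilde C=T_\lambda^{1/2}CT_\lambda^{1/2}$ and $N=T_\lambda^{1/2}HT_\lambda^{1/2}$ is valid, and invertibility of $T_\lambda^{1/2}$ and $\widetilde C^{1/2}$ gives strict positivity of the integrand for $H\neq0$, hence $D^2h(Y)[H,H]<0$. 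What your approach buys is a quantitative, self-contained proof of strictness (a strictly negative definite Hessian everywhere), whereas the paper's equality-case claim for operator concavity of $\sharp_t$ is stated without detail; what the paper's approach buys is brevity and no differentiation under the integral sign to justify. The two computations are close in spirit, since the paper's proof of Theorem \ref{div}(iii) already performs essentially your Hessian calculation in a special case.
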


\begin{proof}
From \eqref{E:f-tr}, $f(X) = \tr F_{t}(A, X) = \tr [A^{1-t} (A^{-1} \sharp_{t} X) A^{1-t}]$ for $X \in \mathbb{P}_{m}$ and $t \in (0,1)$.
Then for any $X, Y \in \mathbb{P}_{m}$ and $\lambda \in [0,1]$,
\begin{displaymath}
\begin{split}
f((1 - \lambda) X + \lambda Y) & = \tr [A^{1-t} (A^{-1} \sharp_{t} ((1 - \lambda) X + \lambda Y)) A^{1-t}] \\
& \geq \tr [A^{1-t} ((1 - \lambda) A^{-1} \sharp_{t} X + \lambda A^{-1} \sharp_{t} Y) A^{1-t}] \\
& = (1 - \lambda) \tr [A^{1-t} (A^{-1} \sharp_{t} X) A^{1-t}] + \lambda \tr [A^{1-t} (A^{-1} \sharp_{t} Y) A^{1-t}] \\
& = (1 - \lambda) f(X) + \lambda f(Y).
\end{split}
\end{displaymath}
The inequality follows from the joint concavity of (metric) geometric mean:
\begin{displaymath}
A \sharp_{t} ((1 - \lambda) X + \lambda Y)) \geq (1 - \lambda) A \sharp_{t} X + \lambda A \sharp_{t} Y.
\end{displaymath}
The equality of the preceding argument holds if and only if $X = Y$. Thus, the map $f$ is strictly concave.
\end{proof}

\begin{remark}
The data processing inequality is an information-theoretic principle stating that the information content of a signal cannot increase under a local physical operation.  For a quantum divergence $\Phi$, this means that for any completely positive trace-preserving map $\Psi$ and for $A, B \in \mathbb{P}_{m}$, the inequality
\begin{displaymath}
\Phi(\Psi(A), \Psi(B)) \leq \Phi(A, B)
\end{displaymath}
must hold.

According to \cite[Theorem 5.16]{Wo},  if a map $\Phi$ is jointly convex and invariant under unitary congruence transformation and tensor product with density matrices, then it satisfies the data processing inequality. From Theorem \ref{T:st-concavity}, the quantum divergence $\Phi$ is convex with respect to the second variable, but not necessarily with respect to the first variable. Despite this, $\Phi$ satisfies the data processing inequality when $t = 1/2$.
\end{remark}

Theorem \ref{T:st-concavity} ensures that the quantum divergence $\Phi$ given by \eqref{E:q-divergence} is strictly convex with respect to the second variable. Consequently, the minimization problem for a positive probability vector $\omega = (w_{1}, \dots, w_{n})$ and $A_{1}, \dots, A_{n} \in \mathbb{P}_{m}$
\begin{displaymath}
\underset{X \in \mathbb{P}_{m}}{\arg \min} \sum_{j=1}^{n} w_{j} \Phi(A_{j}, X)
\end{displaymath}
has a unique solution. We call it the barycenter of $A_{1}, \dots, A_{n}$ for the divergence $\Phi$, denoted by $\mathfrak{B}_{t}(\omega; A_{1}, \dots, A_{n})$.

\begin{theorem} \label{T:equation}
The barycenter $\mathfrak{B}_{t}(\omega; A_{1}, \dots, A_{n})$ of $A_{1}, \dots, A_{n} \in \mathbb{P}_{m}$ and $t\in (0,1)$  for the quantum divergence $\Phi$ is the unique positive definite solution of the equation
\begin{equation} \label{E:equation}
tI = \sum_{j=1}^{n} w_{j} \frac{\sin t \pi}{\pi} \int_{0}^{\infty} A_{j}^{-t} (\lambda A_{j}^{-1} + X)^{-2} A_{j}^{-t} \lambda^{t} d\lambda,
\end{equation}
where $\omega = (w_{1}, \dots, w_{n})$ is a positive probability vector.
\end{theorem}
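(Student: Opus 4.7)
The plan is to derive equation \eqref{E:equation} as the first-order optimality condition for the strictly convex functional $X \mapsto \sum_{j=1}^{n} w_j \Phi(A_j, X)$. By Theorem \ref{T:st-concavity}, $X \mapsto \tr F_t(A_j, X)$ is strictly concave on $\mathbb{P}_m$, so $\Phi(A_j, X) = (1-t)\tr A_j + t\tr X - \tr F_t(A_j, X)$ is strictly convex in $X$, and hence so is the positive convex combination $\sum_j w_j \Phi(A_j, \cdot)$. Consequently the barycenter exists, is unique, and is characterized by the vanishing of the Fr\'echet derivative in $X$. Once the critical-point equation is derived, uniqueness of the positive-definite solution is then automatic.

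Next I would compute that gradient using exactly the machinery appearing in the proof of Theorem \ref{div}(ii). From $\Phi(A,X)=(1-t)\tr A+t\tr X-\tr F_t(A,X)$ one has $D_X\Phi(A,X)=tI-D_X\tr F_t(A,X)$. Invoking the identity $\tr F_t(A,X)=\tr[A^{1-2t}(A^{1/2}XA^{1/2})^t]$ from \eqref{E:f-tr} together with the integral representation of $Z^t$ used earlier, the chain rule yields
\begin{equation*}
D_X\tr F_t(A,X)[Y]=\frac{\sin t\pi}{\pi}\int_0^\infty \tr\bigl[A^{1-2t}(\lambda Z^{-1}+I)^{-1}Z^{-1}A^{1/2}YA^{1/2}Z^{-1}(\lambda Z^{-1}+I)^{-1}\bigr]\lambda^t\,d\lambda,
\end{equation*}
with $Z=A^{1/2}XA^{1/2}$. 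Cycling the trace identifies a Hermitian matrix $H(A,X)$ with $D_X\tr F_t(A,X)[Y]=\tr[H(A,X)\,Y]$ for every $Y\in\mathbb{H}_m$; summing over $j$ with weights $w_j$, the optimality condition reads $tI=\sum_j w_j H(A_j,X)$.

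The crux is rewriting $H(A,X)$ in the form appearing on the right-hand side of \eqref{E:equation}. The key identity is the factorization $(\lambda Z^{-1}+I)^{-1}=A^{1/2}(\lambda X^{-1}+A)^{-1}A^{1/2}$, or equivalently $(\lambda I+Z)^{-1}=A^{-1/2}(\lambda A^{-1}+X)^{-1}A^{-1/2}$, which re-expresses the resolvent entirely in terms of $A$ and $X$. Combined with routine simplifications such as $A^{1/2}Z^{-1}=X^{-1}A^{-1/2}$, this collapses the integrand into one built from $A_j^{-t}(\lambda A_j^{-1}+X)^{-2}A_j^{-t}$. The main obstacle I foresee is the algebraic bookkeeping that distributes the factors of $A_j$ correctly around $(\lambda A_j^{-1}+X)^{-1}$ so that the precise form of \eqref{E:equation} emerges; once that rewriting is carried out, uniqueness of the positive-definite solution follows immediately from the strict convexity of the objective established in the first paragraph.
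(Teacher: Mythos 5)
Your proposal is correct and follows essentially the same route as the paper: both characterize the barycenter as the unique critical point of the strictly convex objective (via Theorem \ref{T:st-concavity}), compute the gradient from the integral representation already worked out in part (ii) of Theorem \ref{div}, and then collapse the integrand using the resolvent identity $A^{1/2}(\lambda I + A^{1/2}XA^{1/2})^{-1}A^{1/2} = (\lambda A^{-1}+X)^{-1}$. The only difference is that you leave the final algebraic rewriting as a foreseen step, whereas the paper carries it out explicitly; no new ideas are needed to complete it.
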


\begin{proof} From the definition of the barycenter, $\mathfrak{B}_{t}(\omega; A_{1}, \dots, A_{n})$ minimizes the objective function
$$
F(X) = \sum_{j=1}^{n} w_{j} \Phi(A_{j}, X),
$$
where $\Phi(A_j, X)= \tr [A_j \nabla_{t} X - F_{t}(A_j, X)]$.
From the proof of (ii) in Theorem \ref{div}, the gradient of $F(X) $ with respect to $X$ is given by
\begin{displaymath}
\nabla F(X) = \sum_{j=1}^{n} w_{j} \left[ tI - A_{j}^{\frac{1}{2}-t} \frac{\sin t \pi}{\pi} \int_{0}^{\infty} (\lambda I + A_{j}^{\frac{1}{2}} X A_{j}^{\frac{1}{2}})^{-1} A_{j} (\lambda I + A_{j}^{\frac{1}{2}} X A_{j}^{\frac{1}{2}})^{-1} \lambda^{t} d\lambda A_{j}^{\frac{1}{2}-t} \right].
\end{displaymath}
Since the objective function $F(X)$ is strictly convex by Theorem \ref{T:st-concavity}, the barycenter $\mathfrak{B}_{t}(\omega; A_{1}, \dots, A_{n})$ coincides with the unique positive definition solution $X$ of the equation obtained by setting  $\nabla F(X)=0$. Thus,
\begin{displaymath}
\begin{split}
t I & = \sum_{j=1}^{n} w_{j} A_{j}^{\frac{1}{2}-t} \frac{\sin t \pi}{\pi} \int_{0}^{\infty} (\lambda I + A_{j}^{\frac{1}{2}} X A_{j}^{\frac{1}{2}})^{-1} A_{j} (\lambda I + A_{j}^{\frac{1}{2}} X A_{j}^{\frac{1}{2}})^{-1} \lambda^{t} d\lambda A_{j}^{\frac{1}{2}-t} \\
& = \sum_{j=1}^{n} w_{j} A_{j}^{-t} \frac{\sin t \pi}{\pi} \int_{0}^{\infty} (\lambda A_{j}^{-1} + X)^{-2} \lambda^{t} d\lambda A_{j}^{-t},
\end{split}
\end{displaymath}
since $A_{j}^{\frac{1}{2}} (\lambda I + A_{j}^{\frac{1}{2}} X A_{j}^{\frac{1}{2}})^{-1} A_{j}^{\frac{1}{2}} = (\lambda A_{j}^{-1} + X)^{-1}$ for each $i$.

The gradient of \( F(X) \) with respect to \( X \) is given by:
\[
\nabla F(X) = \sum_{j=1}^n w_j \left[ tI - \sin(t\pi) \int_0^\infty A_j^{-t} (\lambda A_j^{-1} + X)^{-2} \lambda^t \, d\lambda \right].
\]
Setting \( \nabla F(X) = 0 \) yields the necessary condition for the minimizer \( B_t(\omega; A_1, \dots, A_n) \), which satisfies:
\[
tI = \sum_{j=1}^n w_j \sin(t\pi) \int_0^\infty A_j^{-t} (\lambda A_j^{-1} + X)^{-2} \lambda^t \, d\lambda.
\]
Since \( F(X) \) is strictly convex (Theorem 4.3), this equation has a unique positive definite solution \( X \), which corresponds to the barycenter \( B_t(\omega; A_1, \dots, A_n) \).
\end{proof}

\begin{theorem}
The barycenter $\mathfrak{B}_{t}(\omega; A_{1}, \dots, A_{n})$ of $\mathbb{A} = (A_{1}, \dots, A_{n}) \in \mathbb{P}_{m}^{n}$ and  satisfies the following:
\begin{itemize}
  \item[(1)] if $A_{1}, \dots, A_{n}$ commute then $$\displaystyle \mathfrak{B}_{t}(\omega; \mathbb{A}) = \left( \sum_{j=1}^{n} w_{j} A_{j}^{1-t} \right)^{\frac{1}{1-t}}, \quad
t \in (0,1), $$
  where $\omega = (w_{1}, \dots, w_{n})$ is a positive probability vector.
  \item[(2)] The barycenter is invariant under permutation of the input matrices, that is, $$\mathfrak{B}_{t}(\omega_{\sigma}; \mathbb{A}_{\sigma}) = \mathfrak{B}_{t}(\omega; \mathbb{A}),$$ for any permutation $\sigma$.
  \item[(3)] The barycenter is equivariant under unitary congruence transformation, that is,
  $$\mathfrak{B}_{t}(\omega; U \mathbb{A} U^{*}) = U \mathfrak{B}_{t}(\omega; \mathbb{A}) U^{*},$$ for any unitary matrix $U$.
\end{itemize}
\end{theorem}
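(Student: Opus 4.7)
My plan is to handle (2) and (3) directly from the variational characterization of $\mathfrak{B}_t(\omega;\mathbb{A})$ and the invariance of $\Phi$ that was already recorded in Proposition \ref{P:invariance}, and then to treat (1) by verifying that the proposed closed form satisfies the Euler--Lagrange equation \eqref{E:equation} and invoking uniqueness (which holds by strict convexity of the objective, Theorem \ref{T:st-concavity}).

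For (1), assume $A_1,\dots,A_n$ commute pairwise and let $X_0:=\left(\sum_{j} w_j A_j^{1-t}\right)^{1/(1-t)}$. Since $X_0$ is a polynomial in the $A_j$ (via functional calculus on a commutative unital $*$-subalgebra in which all $A_j$ lie), $X_0$ commutes with every $A_j$. I would then plug $X_0$ into the right-hand side of \eqref{E:equation}: in the commuting case one can rewrite $A_j^{-t}(\lambda A_j^{-1}+X_0)^{-2} A_j^{-t} = A_j^{2-2t}(\lambda I + A_j X_0)^{-2}$, so that the integral representation for fractional powers,
\begin{equation*}
tY^{t-1}=\frac{\sin t\pi}{\pi}\int_0^{\infty}(\lambda I+Y)^{-2}\lambda^t\, d\lambda,\qquad t\in(0,1),
\end{equation*}
applied with $Y=A_j X_0$ yields $\frac{\sin t\pi}{\pi}\int_0^\infty A_j^{-t}(\lambda A_j^{-1}+X_0)^{-2}A_j^{-t}\lambda^t\,d\lambda = tA_j^{1-t}X_0^{t-1}$. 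Summing against $w_j$ and using the definition of $X_0$ gives $\sum_j w_j A_j^{1-t}X_0^{t-1}=X_0^{1-t}X_0^{t-1}=I$, so \eqref{E:equation} holds at $X_0$. Strict convexity of the objective $F(X)=\sum_j w_j\Phi(A_j,X)$ (Theorem \ref{T:st-concavity}) guarantees that \eqref{E:equation} has a unique positive definite solution, so $\mathfrak{B}_t(\omega;\mathbb{A})=X_0$.

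For (2), the defining minimization problem is a sum $\sum_j w_j \Phi(A_j,X)$, and reordering the pairs $(w_j,A_j)$ by any permutation $\sigma$ does not change the value of this sum for any $X$; hence the minimizer is the same. For (3), I would apply Proposition \ref{P:invariance}, which gives $\Phi(UA_jU^*,Y)=\Phi(A_j,U^*YU)$ for every unitary $U$. Therefore
\begin{equation*}
\sum_{j=1}^{n} w_j\, \Phi(UA_jU^*,Y)=\sum_{j=1}^{n} w_j\, \Phi(A_j,U^*YU),
\end{equation*}
so the substitution $Y=UXU^*$ converts the minimization over $Y$ to the one over $X$; the unique minimizer on the left is thus $U\mathfrak{B}_t(\omega;\mathbb{A})U^*$.

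The main obstacle will be the integral manipulation in (1): one must be careful that in the commuting case the scalar spectral substitution $A_j\to a$, $X_0\to x$ reduces the identity to the known real integral $\frac{\sin t\pi}{\pi}\int_0^\infty(\lambda a^{-1}+x)^{-2}\lambda^t\,d\lambda\cdot a^{-2t}=t a^{1-t}x^{t-1}$, which follows cleanly from the substitution $\lambda\mapsto \lambda ax$ inside the standard representation of $y^{t-1}$. Once this scalar identity is verified, the operator version is automatic because $A_j$ and $X_0$ are simultaneously diagonalizable, and the remaining assembly is bookkeeping.
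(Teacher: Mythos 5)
Your proposal is correct and follows essentially the same route as the paper: part (1) is verified by reducing equation \eqref{E:equation} in the commuting case via the integral representation $tY^{t-1}=\frac{\sin t\pi}{\pi}\int_0^\infty(\lambda I+Y)^{-2}\lambda^t\,d\lambda$ and invoking uniqueness from strict convexity, while (2) and (3) follow from permutation invariance of the weighted sum and the unitary invariance of $\Phi$ recorded in Proposition \ref{P:invariance}. Your treatment of (3) is merely a more explicit version of the paper's one-line change-of-variables argument.
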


\begin{proof}
For (1) it is enough to show that $\displaystyle X = \left( \sum_{j=1}^{n} w_{j} A_{j}^{1-t} \right)^{\frac{1}{1-t}}$ is a unique solution of \eqref{E:equation} when $A_{1}, \dots, A_{n}$. Assuming that $X$ and $A_{j}$'s commute, the equation \eqref{E:equation} reduces to
\begin{displaymath}
\begin{split}
tI & = \sum_{j=1}^{n} w_{j} A_{j}^{1-t} \frac{\sin t \pi}{\pi} \int_{0}^{\infty} (\lambda I + A_{j} X)^{-2} \lambda^{t} d\lambda A_{j}^{1-t} \\
& = \sum_{j=1}^{n} w_{j} A_{j}^{1-t} t (A_{j} X)^{t-1} A_{j}^{1-t} \\
& = t X^{t-1} \sum_{j=1}^{n} w_{j} A_{j}^{1-t}.
\end{split}
\end{displaymath}
Solving for $X$, we get $\displaystyle X = \left( \sum_{j=1}^{n} w_{j} A_{j}^{1-t} \right)^{\frac{1}{1-t}}$.

Since the weights $\omega = (w_{1}, \dots, w_{n})$ and matrices $A_{1}, \dots, A_{n}$ can be permuted without affecting the sum in the barycenter equation, it follows that $\mathfrak{B}_{t}(\omega_{\sigma}; \mathbb{A}_{\sigma}) = \mathfrak{B}_{t}(\omega; \mathbb{A})$ for any permutation $\sigma$.

From Proposition \ref{P:invariance}, $F_t$ is  invariant under unitary congruence transformations. Hence we have (3), that is, $\mathfrak{B}_{t}(\omega; U \mathbb{A} U^{*}) = U \mathfrak{B}_{t}(\omega; \mathbb{A}) U^{*}$.

\end{proof}

\vspace{4mm}

\textbf{Acknowledgement} \\


There are no conflicts of interest associated with this work.
The research of Sejong Kim was conducted during the research year of Chungbuk National University in 2024 at the University of Nevada, Reno. His work is also supported by the National Research Foundation of Korea funded by the Ministry of Science and ICT under grant no. NRF-2022R1A2C4001306.



\end{document}